\let\phi\varphi
\newtheorem{prop}{\textsc{Proposition}}[section]
\newtheorem{cor}[prop]{\textsc{Corollary}}
\newtheorem{thm}[prop]{\textsc{Theorem}}
\newtheorem{defn}[prop]{\textsc{Definition}}
\newtheorem{remark}[prop]{\textsc{Remark}}
\newtheorem*{thm*}{\textsc{Theorem}}
\newcommand{\Z}{\mathbb{Z}}
\newcommand{\Q}{\mathbb{Q}}
\newcommand{\R}{\mathbb{R}}
\newcommand{\C}{\widetilde{\mathcal{C}}}
\DeclareMathOperator{\lk}{lk}
\DeclareMathOperator{\sign}{sign}
\DeclareMathOperator{\Diffeo}{Diffeo}
\DeclareMathOperator{\Fix}{Fix}
\let\epsilon\varepsilon
\title[A new invariant of equivariant concordance]{A new invariant of equivariant concordance and results on 2-bridge knots}
\author[]{}
\begin{document}
\maketitle
\begin{abstract}
We study the equivariant concordance classes of two-bridge knots, providing an easy formula to compute their butterfly polynomial \cite{boyle2021equivariant}, and we give two different proofs that no two-bridge knot is equivariantly slice. 
Finally, we introduce a new invariant of equivariant concordance for strongly invertible knots. Using this invariant as an obstruction we strengthen the result on two-bridge knots, proving that their equivariant concordance order is always infinite. 

\end{abstract}

\section{Introduction}
A strongly invertible knot is a pair $(K,\rho)$ where $K\subseteq S^3$ is a knot and $\rho\in \Diffeo^+(S^3)$ is an  involution such that $\rho(K)=K$ and $\rho$ reverses the orientation on $K$. By the solution of the Smith conjecture \cite{smith} it is known that $\Fix(\rho)$ is an unknot which intersects $K$ in two points. In \cite{sakuma} the author gave a well defined notion of \emph{equivariant connected sum} for strongly invertible knots by endowing them with a \emph{direction}.
Furthermore, Sakuma \cite{sakuma} studied strongly invertible knots up to \emph{equivariant concordance} and introduced the \emph{equivariant concordance group} $\C$.

The equivariant concordance group is far from being understood. However, Di Prisa proved in \cite{diprisa} that $\C$ is not abelian and several authors defined new invariants for equivariant concordance and obstructions for equivariant sliceness, see for example  \cite{boyle2021equivariant,alfieri2021strongly,dai_hedden_mallick,dai_mallick_stoffregen,miller_powell}. In particular, Boyle and Issa \cite{boyle2021equivariant} defined the \emph{butterfly link} associated with a directed strongly invertible knot, and used it to define several equivariant concordance invariants.

In this paper we study some of this invariants in the case of $2$-bridge knots. In Proposition \ref{prop:formula} we provide a formula to compute the \emph{butterfly polynomial} (see \cite{boyle2021equivariant}) for $2$-bridge knots. Our initial goal was to prove Proposition \ref{no_2_slice} combining the obstructions given by Sakuma's \emph{$\eta$-function} (see \cite{sakuma}) and by the butterfly polynomial. This approach was inconclusive. However, using this formula we prove Corollary \ref{surjectivity}, which is the analogous of \cite[Theorem II]{sakuma} for the butterfly polynomial.

The main result of the paper is Theorem \ref{infinite_order}, which states the following:
\begin{thm*}
Let $K$ be a directed strongly invertible knot and let $\widehat{L_b}(K)$ be its butterfly link endowed with the opposite semi-orientation. If the Conway polynomial of $\widehat{L_b}(K)$ is non-zero then $K$ is not equivariantly slice and has infinite order in $\C$.
\end{thm*}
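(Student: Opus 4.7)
The plan is to use the non-vanishing of $\nabla_{\widehat{L_b}(K)}$, taken with the opposite semi-orientation, as an equivariant-concordance obstruction. The first step is to prove that if $K$ is equivariantly slice then $\widehat{L_b}(K)$ with the opposite semi-orientation bounds a disjoint union of disks in $B^4$, so that the standard vanishing of the Conway polynomial on slice links forces $\nabla_{\widehat{L_b}(K)}=0$, contradicting the hypothesis. Concretely, given an equivariant slice disk $D\subset B^4$ for $K$, the axial involution $\rho$ extends to an involution $\widetilde\rho$ of $B^4$ preserving $D$, whose fixed-point set is a $2$-disk meeting $D$ in an arc. Performing the $4$-dimensional analogue of the cut-and-paste construction that defines $\widehat{L_b}$ at the boundary should produce the required slicing disks, and the orientations they induce on their boundaries are precisely those of the opposite semi-orientation.

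For the infinite-order statement, I would establish a multiplicativity property of $\nabla_{\widehat{L_b}(\cdot)}$ under equivariant connected sum, of the form
\[
\nabla_{\widehat{L_b}(K_1\#K_2)}=f(z)\cdot\nabla_{\widehat{L_b}(K_1)}\cdot\nabla_{\widehat{L_b}(K_2)}
\]
for some fixed nonzero polynomial $f(z)$, derived from a band-sum description of how the butterfly link transforms under equivariant connected sum, together with a Torres-type formula for the resulting Conway polynomial. Then, if $K$ had order $n$ in $\C$, the $n$-fold equivariant sum $\#^n K$ would be equivariantly slice, so by the first step its butterfly Conway polynomial would vanish; by multiplicativity it would equal $f(z)^{n-1}\cdot\nabla_{\widehat{L_b}(K)}^n\neq 0$, a contradiction.

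The main obstacle will be Step~1: constructing disjoint disks for the two components of $\widehat{L_b}(K)$ in $B^4$ out of an equivariant slice disk for $K$, with boundary orientations matching the opposite semi-orientation. This construction is genuinely four-dimensional, and the equivariance of $D$ is essential --- absent it the same cut-and-paste yields only a surface of possibly higher genus, whose Conway polynomial need not vanish. The delicate orientation bookkeeping in this step is what singles out the opposite semi-orientation as the relevant one: with the natural semi-orientation the analogous argument would either fail or produce an identically vanishing invariant. A secondary difficulty will be proving the precise multiplicativity statement in Step~2, where one must verify that the band-sum realization of equivariant connected sum respects the reversed semi-orientation, a subtlety that does not appear for ordinary concordance invariants.
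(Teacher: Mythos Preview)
Your Step 1 rests on a false premise: there is no ``standard vanishing of the Conway polynomial on slice links.'' A $2$-component link that bounds disjoint disks in $B^4$ is concordant to the unlink, but the Conway polynomial is \emph{not} a concordance invariant; one only gets a Fox--Milnor-type factorization of the reduced Alexander (Hosokawa) polynomial, which need not be zero. So even granting the equivariant cut-and-paste you describe (which is essentially Corollary~\ref{iff_equiv_slice} and does yield slicing disks for $\widehat{L_b}(K)$), no contradiction with $\nabla_{\widehat{L_b}(K)}\neq 0$ follows from sliceness alone. Your Step 2 is also wrong as stated: $\nabla_{\widehat{L_b}(-)}$ is not multiplicative under $\widetilde{\#}$. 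The actual behaviour is additive, namely
\[
\nabla_{\widehat{L_b}(K\widetilde{\#}J)} \;=\; \nabla_{J}\,\nabla_{\widehat{L_b}(K)} + \nabla_{K}\,\nabla_{\widehat{L_b}(J)},
\]
so no Torres-type multiplicative formula of the form you propose can hold.

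The paper avoids both problems by not using $\nabla_{\widehat{L_b}(K)}$ directly as the obstruction. Instead it introduces the \emph{moth link} $L_m(K)$ (the strong fusion of $\widehat{L_b}(K)$ along the equivariant band) and takes its Kojima--Yamasaki $\eta$-function as the invariant. Two short lemmas do all the work: (i) $L_m(K\widetilde{\#}J)$ is a band sum of $L_m(K)$ and $L_m(J)$, so by Cochran's additivity of $\eta$ under band sum the map $\eta(L_m(-)):\C\to\Q(t)$ is a genuine group homomorphism; and (ii) by Kaiser's strong-fusion formula, $\eta(L_m(K))(t)=\nabla_{\widehat{L_b}(K)}(z)\big/\bigl(z\,\nabla_K(z)\bigr)$. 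Since $\nabla_K\neq 0$ for every knot, $\nabla_{\widehat{L_b}(K)}\neq 0$ forces $\eta(L_m(K))\neq 0$, and a nonzero value of a homomorphism immediately gives infinite order. No four-dimensional construction or sliceness argument is needed.
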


Using this result we are able to prove Proposition \ref{prop:2bri}, showing that every $2$-bridge knot has infinite order in the equivariant concordance group.

\subsection*{Organization of the paper}
Section~\ref{sec:preliminari} contains a brief recap on the some results on directed strongly invertible knots that we need in the following. For the details see \cite{sakuma,boyle2021equivariant}. In Section~\ref{sec:eta} we provide a formula for the computation of the butterfly polynomial (\cite[Definition 4.5]{boyle2021equivariant}) of $2$-bridge knots. In Section~\ref{sec:2bri} we prove in two different ways that every $2$-bridge knot is not equivariantly slice. The first one relies on the \emph{axis-linking number} introduced in \cite{boyle2021equivariant}, while the second one uses the nullity of the butterfly link. Finally, in Section~\ref{sec:moth} we define a new invariant of equivariant concordance for strongly invertible knots. We use this invariant to show that the equivariant concordance order of every $2$-bridge knot is infinite.

\section{Preliminaries}\label{sec:preliminari}

\subsection{Directed strongly invertible knots}
We briefly recall the notion of direction for a strongly invertible knot and of the equivariant concordance group.
\begin{defn}
A \emph{direction} on a strongly invertible knot $(K,\rho)$ is the choice of an oriented half-axis $h$, i.e. one of the two connected components of $\Fix(\rho)\setminus K$.
\end{defn}
We call the triple $(K,\rho,h)$ a \emph{directed strongly invertible knot}. We write $K$ instead of $(K,\rho,h)$ when it is not strictly necessary to specify the choice of strong inversion and direction.
\begin{defn}
Let $(K,\rho,h)$ be a directed strongly invertible knot. We define
\begin{itemize}
    \item the \emph{mirror} of $(K,\rho,h)$ by $mK=(mK,\rho,h)$,
    \item the \emph{axis-inverse} of $(K,\rho,h)$ by $iK=
    (K,\rho,-h)$, where $-h$ is the direction given by the half-axis $h$ with the opposite orientation,
    \item the \emph{antipode} of $(K,\rho,h)$ by $aK=(K,\rho,h')$, where $h'$ is the direction given by the half-axis complementary to $h$. The orientation on $h'$ is the one coherent with $h$.
\end{itemize}
\end{defn}
\begin{defn}
We say that two DSI knots $(K_i,\rho_i,h_i)$, $i=0,1$ are \emph{equivariantly concordant} if there exists a smooth properly embedded annulus $C\cong S^1\times I\subset S^3\times I$, invariant with respect to some involution $\rho$ of $S^3\times I$ such that:
\begin{itemize}
    \item $\partial (S^3\times I,C)=(S^3,K_0)\sqcup -(S^3,K_1)$,
    \item $\rho$ is in an extension of the strong inversion $\rho_0\sqcup\rho_1$ on $S^3\times 0\sqcup S^3\times 1$,
    \item the orientations of $h_0$ and $-h_1$ induce the same orientation on the annulus $\Fix(\rho)$, and $h_0$ and $h_1$ are contained in the same component of $\Fix(\rho)\setminus C$.
\end{itemize}
\end{defn}

The \emph{equivariant concordance group} is the set $\C$ of classes of directed strongly invertible knots up to equivariant concordance, endowed with the operation of \emph{equivariant connected sum}, which we denote by $\widetilde{\#}$ (see \cite{sakuma,boyle2021equivariant} for details).

\begin{remark}
\emph{
It is easy to see that the mirror, axis-reverse and antipode induce involutive maps from the equivariant concordance group to itself. From the definition of equivariant connected sum we can easily deduce the following properties. Given two directed strongly invertible knot $K$ and $J$ we have:
\begin{itemize}
    \item $m(K\widetilde{\#}J)=mK\widetilde{\#}mJ$,
    \item $i(K\widetilde{\#}J)=iJ\widetilde{\#}iK$,
    \item $a(K\widetilde{\#}J)=aJ\widetilde{\#}aK$.
\end{itemize}
Equivalently, we can say that $m$ is an automorphism of $\C$, while $i$ and $a$ are anti-automorphisms.}
\end{remark}

\begin{remark}\label{rem_sliceness}
\emph{
As a consequence, the equivariant concordance order of a directed strongly invertible knot $(K,\rho,h)$ does not depend on the choice of a direction and it does not change by taking the mirror of the knot.}
\end{remark}

\subsection{Butterfly links}
In \cite[Definition 4.1]{boyle2021equivariant} Boyle and Issa associate a directed strongly invertible knot $(K,\rho,h)$ with a $2$-components $2$-periodic link (i.e. the involution $\rho$ exchanges its components), called the \emph{butterfly link} $L_b(K)$, defined as follows.
Take an equivariant band $B$, parallel to the preferred half-axis $h$, which attaches to $K$ at the two fixed points.
Performing a band move on $K$ along $B$ produces a $2$-component link with linking number between components depending on the number of twists of $B$. The $L_b(K)$ is the one obtained from such a band move on $K$, so that the linking number between its components is $0$.
Observe that $\partial B\setminus K$ consists of two arcs parallel to $h$, which we orient as $h$. The arcs lie in different components of $L_b(K)$ and the orientation on each component of $L_b(K)$ is the one induced from the respective arc.

The following result can be proven easily by adapting the proof of \cite[Proposition 7]{boyle2021equivariant}. We report the proof because it will be useful for Proposition \ref{prop:conc_inv}.
\begin{prop}\label{b_link_concordance}
Let $(K_i,\rho_i,h_i)$, $i=0,1$, be two equivariantly concordant directed strongly invertible knots. Then, $L_b(K_0)$ and $L_b(K_1)$ are also equivariantly concordant (as $2$-periodic links).
\end{prop}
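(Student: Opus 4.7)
The plan is to modify the equivariant concordance annulus $C$ into a concordance between $L_b(K_0)$ and $L_b(K_1)$ by performing a simultaneous equivariant band move in $S^3 \times I$ that is the natural $4$-dimensional analog of the band move used to define the butterfly link.

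First, I would analyse the structure of $\Fix(\rho)$. Since $\rho$ extends $\rho_0 \sqcup \rho_1$, the fixed set is a properly embedded smooth surface whose boundary consists of the two unknots $\Fix(\rho_0), \Fix(\rho_1)$ in $S^3 \times \{0,1\}$. The annulus $C$ meets $\Fix(\rho)$ transversely in two arcs, each joining a fixed point of $\rho_0$ on $K_0$ to a fixed point of $\rho_1$ on $K_1$, so in particular $\Fix(\rho)$ is connected and hence an annulus (as is tacitly assumed in the third bullet of the definition). These two arcs cut it into two disks, and by the coherence condition on directions the oriented half-axes $h_0, h_1$ lie in the boundary of the same disk, which I call $D$.

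Second, I would build an equivariant $3$-dimensional band $\mathcal{B} \subset S^3 \times I$ that extends the bands $B_0, B_1$ used to define $L_b(K_0), L_b(K_1)$. Concretely, $\mathcal{B}$ is an equivariant tubular neighborhood of $D$ in $S^3 \times I$, chosen in the normal direction to $\Fix(\rho)$ so that $\mathcal{B} \cap (S^3 \times \{i\}) = B_i$ for $i=0,1$. Such a $\mathcal{B}$ exists because each $B_i$ is, by construction, a standard equivariant band neighborhood of $h_i \subset \partial D$, and $D$ furnishes a continuous interpolation between these neighborhoods across the cobordism. The intersection $\mathcal{B} \cap C$ is then a disjoint union of two disks, each a rectangular cobordism between an on-$K_0$ arc of $B_0$ and an on-$K_1$ arc of $B_1$.

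Third, I perform the equivariant band surgery
\[
C' \;=\; \bigl(C \setminus (\mathcal{B} \cap C)\bigr) \;\cup\; \bigl(\partial \mathcal{B} \setminus (B_0 \cup B_1 \cup (\mathcal{B} \cap C))\bigr).
\]
By construction $C' \cap (S^3 \times \{i\}) = L_b(K_i)$ for $i=0,1$, the surface $C'$ is $\rho$-invariant (all ingredients are), and $\rho$ exchanges the two components of $C'$ at each time level since $\rho_i$ exchanges the two components of $L_b(K_i)$. The essential topological check is that $C'$ is a disjoint union of two annuli: removing the two product strips $\mathcal{B} \cap C$ from $C$ splits the annulus into two rectangles, and the complementary piece of $\partial \mathcal{B}$ caps them off and pairs them up into two disjoint annuli, one per component of the butterfly links, exactly parallelling the band move $K_i \to L_b(K_i)$ at each end. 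Finally, the orientation on $D$ induced by $h_0$ and $-h_1$ provides, along its normal, a pair of parallel oriented arcs on $\partial \mathcal{B} \setminus C$ which equip $C'$ with the direction coherence required of an equivariant concordance of $2$-periodic links.

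The main obstacle is the construction of the equivariant thickened band $\mathcal{B}$ interpolating between $B_0$ and $B_1$, together with the verification that $C'$ has the correct topology of two disjoint annuli and not a surface of higher genus. The existence of $\mathcal{B}$ rests on the coherence of the half-axes along $D \subset \Fix(\rho)$, which is precisely the content of the third bullet in the definition of equivariant concordance; once $\mathcal{B}$ is in place the topological and equivariance checks for $C'$ are routine.
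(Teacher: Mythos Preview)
Your approach is essentially the same as the paper's: both cut the fixed annulus $\Fix(\rho)$ along the two arcs of $C\cap\Fix(\rho)$ to obtain the disk $D$ containing $h_0$ and $h_1$, take an equivariant tubular neighbourhood of $D$ (your $\mathcal{B}$, the paper's $N$ and then $E\cong D^1\times D^1\times D^1$), and use it to perform a simultaneous band surgery on $C$.

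There is, however, a genuine gap in your second step. You assert that $\mathcal{B}$ can be chosen so that $\mathcal{B}\cap(S^3\times\{i\})=B_i$ for \emph{both} $i=0,1$, where $B_0,B_1$ are the specific bands (with the specific number of twists) producing the butterfly links. Your justification, that ``$D$ furnishes a continuous interpolation between these neighbourhoods across the cobordism'', does not address the framing. A $D^1$-subbundle of a tubular neighbourhood of $D$ extending a prescribed band over $h_0$ certainly exists (since $D$ is a disk), but its restriction over $h_1$ is then determined, and a priori there is no reason it should coincide with the particular band $B_1$ whose band move yields $L_b(K_1)$: the two could differ by some number of full twists.

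The paper handles this by \emph{not} attempting to match both ends at once. It fixes $B_0$ so that the band move on $K_0$ produces $L_b(K_0)$, extends the $D^1$-subbundle over $D$, and lets $B_1$ be whatever band appears at the other end, yielding some $2$-component link $L$ from $K_1$. Only then does it invoke the concordance invariance of the linking number between components of a $2$-component link to conclude that $L$ has linking number $0$, hence $L=L_b(K_1)$. That final linking-number step is precisely the missing ingredient in your argument; once you add it, your proof and the paper's coincide.
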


\begin{proof}
Let $C\subset S^3\times I$ be a concordance between $(K_0,\rho_0,h_0)$ and $(K_1,\rho_1,h_1)$ invariant with respect to an extension $\rho:S^3\times I\longrightarrow S^3\times I$ of $\rho_0\sqcup\rho_1$.
Let $A=\Fix(\rho)$ be the annulus of fixed points of $\rho$. Observe that $C\cap A=\alpha\cup\beta$, where $\alpha, \beta$ are two curves joining respectively the initial and final points of the half-axes of $K_0,K_1$.

Now $A\setminus(\alpha\cup\beta)$ has two connected components: call $D$ the component containing $h_0$ and $h_1$.
Choose an equivariant tubular neighborhood $N$ of $D$ and observe that $N\cap C$ is an $D^1$-subbundle of $N_{|\alpha\cup\beta}$.
Consider two equivariant bands $B_i\subset S^3\times\{i\}$, $i=0,1$, with $B_i$ intersecting $K_i$ and containing the half-axis $h_i$.
We can choose $B_i$ in such way that $B_i\setminus K_i\subset N$, hence $B_0\cup B_1\cup C$ intersect $N$ in a $D^1$-subbundle of $N_{|\partial D}$.

Choose $B_0$ so that the band move of $K_0$ along $B_0$ produces $L_b(K_0)$, and take $B_1$ so that the $D^1$-subbundle extends over $D$ to an $D^1$-subbundle $E$ of $N$. Call $L$ the $2$-link obtained from $K_1$ by the band move along $B_1$.

Now $E\cong D^1\times D\cong D^1\times D^1\times D^1$, where $0\times \partial D^1\times D^1=\alpha\cup\beta$. Then $C_b=(C\setminus D^1\times\partial D^1\times D^1)\cup \partial D^1\times D^1\times D^1$ is an equivariant concordance between $L_b(K_0)$ and $L$.
Since the linking number between components is a concordance invariant of $2$-component links, we have $L=L_b(K_1)$.
\end{proof}

\begin{cor}\label{iff_equiv_slice}
A directed strongly invertible knot is equivariantly slice if and only if its butterfly link is equivariantly slice (as a $2$-periodic $2$-link).
\end{cor}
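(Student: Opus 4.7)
The plan is to prove both implications.

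For the ``only if'' direction, suppose $K$ is equivariantly slice. Removing a small equivariant $4$-ball around a fixed point lying on an equivariant slice disk of $K$ yields an equivariant concordance from $K$ to the directed unknot $U$ (endowed with its standard $\pi$-rotation strong inversion). A direct geometric check shows that $L_b(U)$ is the two-component unlink: the defining band can be taken to lie flat in the disk bounded by $U$, producing two disjoint planar circles with linking number $0$, exchanged by $\rho$. Since the unlink is manifestly equivariantly slice, Proposition~\ref{b_link_concordance} applied to the concordance $K \sim U$ yields an equivariant concordance $L_b(K) \sim L_b(U)$; stacking with equivariant slice disks for the unlink produces equivariant slice disks for $L_b(K)$.

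For the ``if'' direction, suppose $L_b(K)$ bounds disjoint, equivariantly embedded disks $D_1, D_2 \subset B^4$, swapped by an involution $\widetilde{\rho}$ extending $\rho$. Using the equivariant collar neighbourhood theorem, one finds an equivariant collar $\partial B^4 \times [0,1) \subset B^4$ on which $\widetilde{\rho}$ acts as $\rho \times \id$, and after an equivariant isotopy we may assume that $D_1, D_2$ enter this collar in product form. The defining band $B \subset S^3$ is equivariant (a small neighbourhood of the half-axis $h$), so it determines an equivariant saddle cobordism $P$ inside the collar from $K$ to $L_b(K)$; topologically, $P$ is a pair of pants with $\chi(P) = -1$. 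Gluing produces a $\widetilde{\rho}$-invariant smooth surface
\[
\Sigma = P \cup D_1 \cup D_2 \subset B^4, \qquad \partial \Sigma = K,
\]
with $\chi(\Sigma) = -1 + 1 + 1 = 1$, so $\Sigma$ is connected with a single boundary component and hence an equivariant disk. Therefore $K$ is equivariantly slice.

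The only mildly technical step is ensuring that the product involution on the collar and $\widetilde{\rho}$ on the inner ball match along the intermediate $S^3$; this is precisely what the equivariant collar neighbourhood theorem delivers, so it is not really an obstacle. The direction data of the DSI knot plays no role in the argument, in view of Remark~\ref{rem_sliceness}.
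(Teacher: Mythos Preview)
Your proof is correct and is precisely the argument the paper leaves implicit: the ``only if'' direction is an immediate application of Proposition~\ref{b_link_concordance} to a concordance from $K$ to the unknot, and the ``if'' direction reverses the band surgery from that proposition's proof by capping the equivariant pair-of-pants cobordism $K\leadsto L_b(K)$ with the given equivariant slice disks. The paper states the result as a corollary without proof, and your write-up is exactly the natural justification.
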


\subsection{Strong inversions on two-bridge knots}
Let $K=K(p,q)\subseteq S^3$ be a $2$-bridge knot. From \cite{siebenmann1975exercices} we know that we can write $p/q$ as a continued fraction:
\[ [a_1,\dots,a_n] = a_1 + \cfrac{1}{a_2 + \cfrac{1}{\ddots + \cfrac{1}{a_n} }} \]
where $a_1,\dots,a_n$ and $n$ are even non-zero integers.

Recall that every $2$-bridge knot is simple (see \cite{hatcher1985incompressible}). 
In \cite{sakuma} the author shows that a hyperbolic $2$-bridge knot $K(p,q)$ admits exactly two inequivalent structures of strongly invertible knot, namely the ones described in the diagrams
\[ I_1(a_1,a_3,\dots,a_{n-1};a_2/2,a_4/2,\dots,a_n/2), \]
and
\[ I_1(-a_n,-a_{n-2},\dots,-a_2;-a_{n-1}/2,\dots,-a_3/2,-a_1/2), \]
where the strong involution on a diagram $I_1(\alpha_1,\dots,\alpha_n;c_1\dots,c_n)$ is given by the $\pi$-rotation around the vertical axis (see Figure~\ref{I1a}).

If $K(p,q)$ is a torus knot, it admits one only strong inversion, namely the one described by $I_1(a_1,a_3,\dots,a_{n-1};a_2/2,a_4/2,\dots,a_n/2).$
 \begin{figure}[htt]
        \centering
         \begin{tikzpicture}
          \draw (0,0) node[above right]{\includegraphics[width=0.49\linewidth]{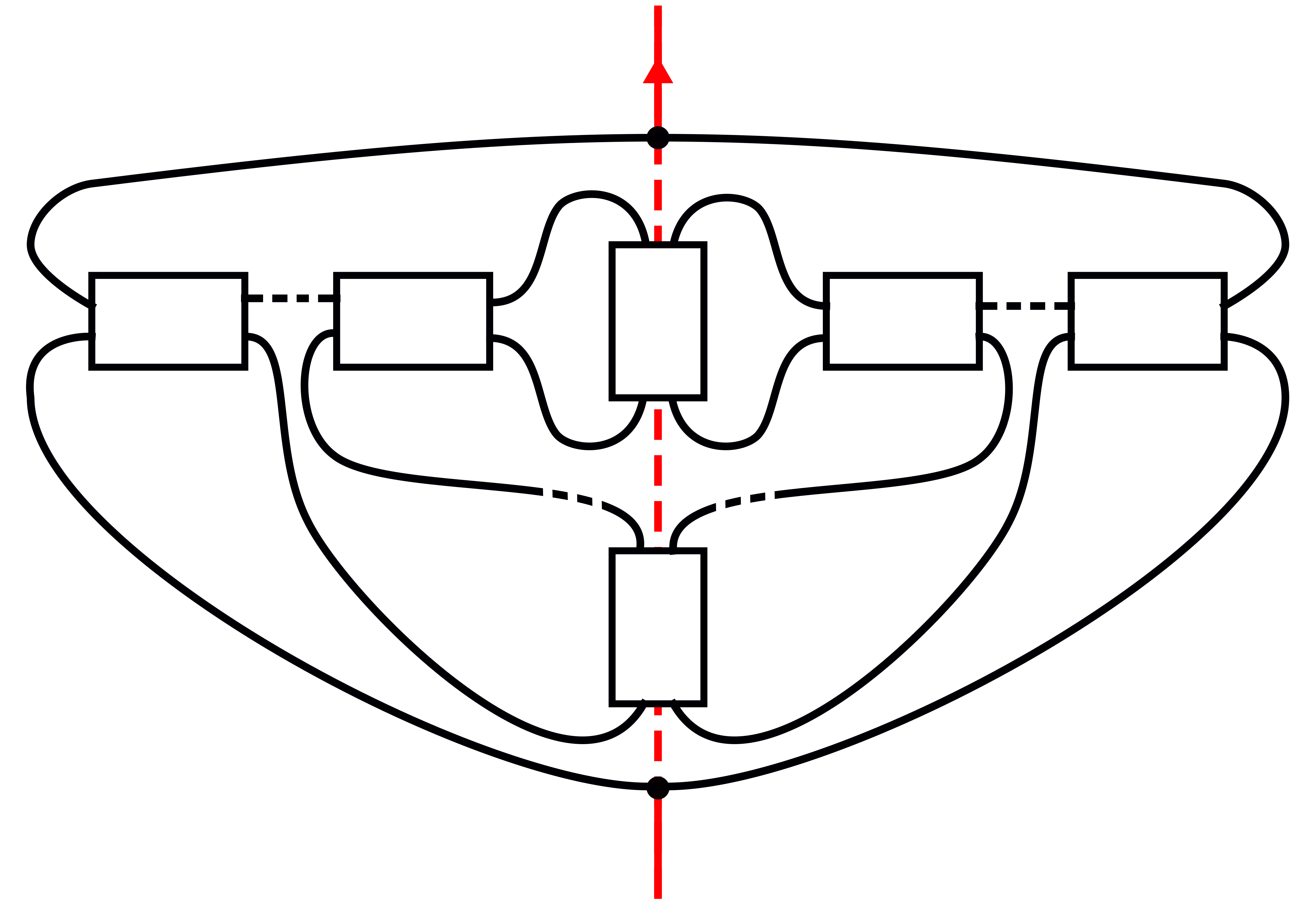}};
   		\draw (1.07,3.52) node (a) {$-c_n$};
   		\draw (2.49,3.52) node (a) {$-c_1$};
   		\draw (6.58,3.52) node (a) {$-c_n$};
   		\draw (5.2,3.52) node (a) {$-c_1$};
            \draw (3.85,3.48) node (a) {$\alpha_1$};
            \draw (3.84,1.77) node (a) {$\alpha_n$};
           \end{tikzpicture}
           \caption{$I_1(\alpha_1,\dots,\alpha_n;c_1\dots,c_n)$.}
           \label{I1a}
        \end{figure}
In the following we will consider $I_1(\alpha_1,\dots,\alpha_n;c_1\dots,c_n)$ as a diagram for the directed strongly invertible knot $K=K(p,q)$, with the direction given by the oriented unbounded half-axis in Figure \ref{I1a}, unless the direction is otherwise specified.
Here $n>0$, $\alpha_1,\dots,\alpha_n\in 2\Z\setminus\{0\}$ and $c_1,\dots,c_n\in\Z\setminus\{0\}$.

\section{Eta-function}\label{sec:eta}
Let $K\cup J$ be a $2$-component link with linking number $0$ between components. 
In \cite{kojima_yamasaki} Kojima and Yamasaki introduced the
$\eta$-function, which is a topological concordance invariant for such links.

We briefly recall the construction of this invariant. Let $X_K$ be the complement of $K$ in $S^3$ and let $\widetilde{X}_K
\longrightarrow X_K$ be its infinite cyclic covering. Denote by $t$ a generator of the deck transformation of $\widetilde{X}_K$. Recall that the Alexander module of $K$ is $H_1(\widetilde{X}_K,\Z)$ endowed with the $\Z[t,t^{-1}]$-module structure given by the action of $t$.
Now let $l$ be the canonical longitude of $J$ and let $\widetilde{l}$ and $\widetilde{J}$ be two nearby lifts of $l$ and $J$ to $\widetilde{X}_K$. 

Since $\lk(K,J)=0$, $\widetilde{l}$ and $\widetilde{J}$ are closed curves, hence they can be seen as classes in $H_1(\widetilde{X}_K,\Z)$. Since the Alexander module is a torsion $\Z[t,t^{-1}]$-module, there exists a non-zero $f(t)\in\Z[t,t^{-1}]$ such that $f(t)\cdot\widetilde{l}=0$, i.e. we can find a $2$-chain $\Delta$ such that $\partial \Delta=f(t)\cdot\widetilde{l}$.
Then the $\eta$-function is defined as

$$\eta(K,J;t)=\frac{1}{f(t)}\sum_{n\in\Z}\#(\Delta\cap t^n\widetilde{J})\cdot t^n,$$
where $\#(\Delta\cap t^n\widetilde{J})$ is the algebraic intersection.

One can check that $\eta$ is well defined and that it has the following properties (see \cite{kojima_yamasaki}):

\begin{enumerate}[i)]
    \item $\eta(K,J;t)=\eta(K,J;t^{-1})$,
    \item $\eta(K,J;1)=0$,
    \item $\eta$ does not depend on the orientation of the link,
    \item $\eta$ is an invariant of topological concordance of links.
\end{enumerate}

Observe that in general $\eta$ does depend on the order of the components of the link, i.e. $\eta(K,J;t)\neq\eta(J,K;t)$.

In the following we will denote by $\mathbb{Z}\langle t\rangle\subseteq \mathbb{Z}[t,t^{-1}]$ the subgroup of Laurent polynomials satisfying properties i) and ii).

Boyle and Issa \cite{boyle2021equivariant} defined the \emph{butterfly polynomial} $\eta(L_b(K))$ of a directed strongly invertible knot $K$ as $\eta$-function of its butterfly link (since $L_b(K)$ is $2$-periodic it does not depend on the order of its components) and showed that it induces a group homomorphism
$$\eta(L_b(-)):\C\longrightarrow\Q(t).$$

\begin{remark}
Since the butterfly polynomial induces a homomorphism, every directed strongly invertible knot with non-trivial butterfly polynomial has infinite order in $\C$.
\end{remark}

We describe now a formula for the butterfly polynomial of $2$-bridge knots, similar to the one for Sakuma's $\eta$-function \cite[Proposition 2.3]{sakuma}.
To do so, we report a convenient algorithm \cite{sakuma,snape} to compute $\eta(K,J;t)$ in the case of a $2$-component link $L=K\cup J$ with the component $K$ unknotted.

In this special case the infinite cyclic cover of $X_K$ is diffeomorphic to $\R\times D^2$, and the $\eta$-function of $L$ is simply
\[ \eta(K,J;t) = \sum_{i\in\Z} lk(\widetilde{l},t^i(\widetilde{J}))t^i. \]

The algorithm consists of the following $4$ steps.
\begin{enumerate}
    \item Start by noting that $lk(\widetilde{l},t^i(\widetilde{J}))t^i=lk(\widetilde{J},t^i(\widetilde{J}))t^i$ for $i\neq0$, since $\widetilde{l}$ is a nearby perturbation of $\widetilde{J}$. Therefore, letting $r=\lk(\widetilde{l},J)$, we get
\[ \sum_{i\in\Z}lk(\widetilde{l},t^i(\widetilde{J}))t^i =
   \sum_{i\in\Z\setminus0}lk(\widetilde{J},t^i(\widetilde{J}))t^i + r,
   \]
In the following steps we compute $\overline{\eta}(t)=\sum_{i\in\Z\setminus0}lk(\widetilde{J},t^i(\widetilde{J}))t^i$.
Since $\eta(1)=0$ it is easy to retrieve $r=-\overline{\eta}(1)$.

\item Draw a fundamental domain of the infinite cyclic cover $\widetilde{X}_K$ (see Figure~\ref{fig:fondom}).
\begin{figure}[ht]
    \centering
    \includegraphics[width=0.7
    \linewidth]{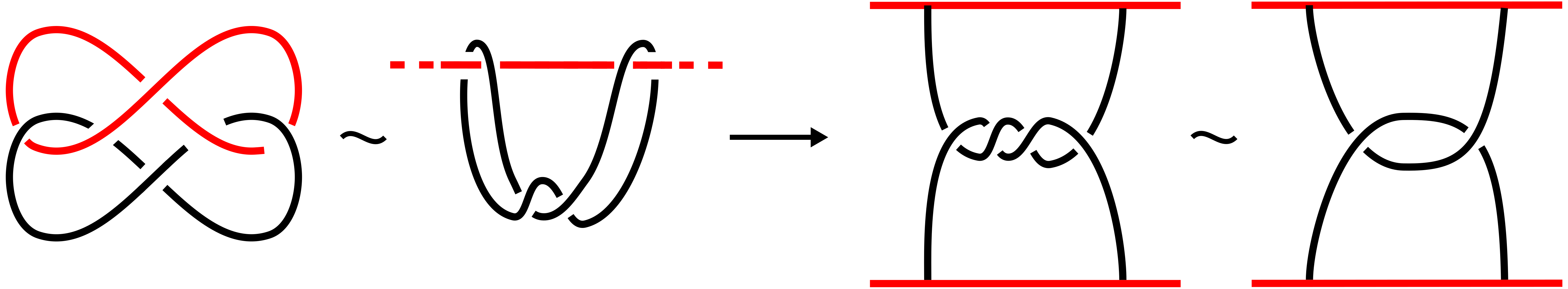}
    \caption{Fundamental domain of the Whitehead link.}
    \label{fig:fondom}
\end{figure}
Assign a label and an orientation to each arc as follows:
\begin{enumerate}[i.]
 \item The arc starting from the top right point has index $0$ and is oriented downwards.
 \item Suppose an arc $\alpha$ is already labelled and oriented. Let $A$ be the end point of $\alpha$ and $B$ be the point opposite to $A$. Call $\beta$ the strand that starts from $B$ (by saying this we are orienting it). Define $\text{index}(\beta)$ (the label on $\beta$) to be $\text{index}(\alpha)+1$ if $B$ lies on the lower side of the domain or $\text{index}(\alpha) - 1$ if $B$ is on the upper side.
\end{enumerate}

The labels we put on the strands keep track of which translate of $\widetilde{J}$ in $\widetilde{X}_K$ they correspond to. A strand labelled by $i$ is a portion of $t^i(\widetilde{J})$. Hence a crossing where an arc labelled by $i$ overcrosses an arc labelled by $j$ corresponds to a crossing between $t^i(\widetilde{J})$ and $t^j(\widetilde{J})$ or, equivalently, between $\widetilde{J}$ and $t^{i-j}(\widetilde{J})$. This motivates the following step.
\item Assign to each double point $P$ a sign $\epsilon_P\in\{+,-\}$ and an integer $d_P\in\mathbb{Z}$ as follows. The sign $\epsilon_P$ is the sign of the crossing and $d_P$ is the difference between the label on the overcrossing arc and the label of the undercrossing arc.
\item Now let \[ \overline{\eta}(t) = \sum_P \epsilon_P t^{d_P} \quad\text{and}\quad r = -\overline{\eta}(1).\] Then $\eta(t)$ is obtained as $\eta(t) = \overline{\eta}(t) + r$.
\end{enumerate}

We will use this algorithm to prove Proposition~\ref{prop:formula}.
     \begin{figure}[htt]
   	 \centering
        \begin{subfigure}{0.49\textwidth}
        \centering
         \begin{tikzpicture}
          \draw (0,0) node[above right]{\includegraphics[width=\linewidth]{diag_I1.png}};
   		\draw (1.07,3.52) node (a) {$-c_n$};
   		\draw (2.49,3.52) node (a) {$-c_1$};
   		\draw (6.58,3.52) node (a) {$-c_n$};
   		\draw (5.2,3.52) node (a) {$-c_1$};
            \draw (3.85,3.48) node (a) {$\alpha_1$};
            \draw (3.84,1.77) node (a) {$\alpha_n$};
           \end{tikzpicture}
           \caption{}
           \label{I1}
        \end{subfigure}
        \begin{subfigure}{0.49\textwidth}
        \centering
         \begin{tikzpicture}
          \draw (0,0) node[above right]{\includegraphics[width=\linewidth]{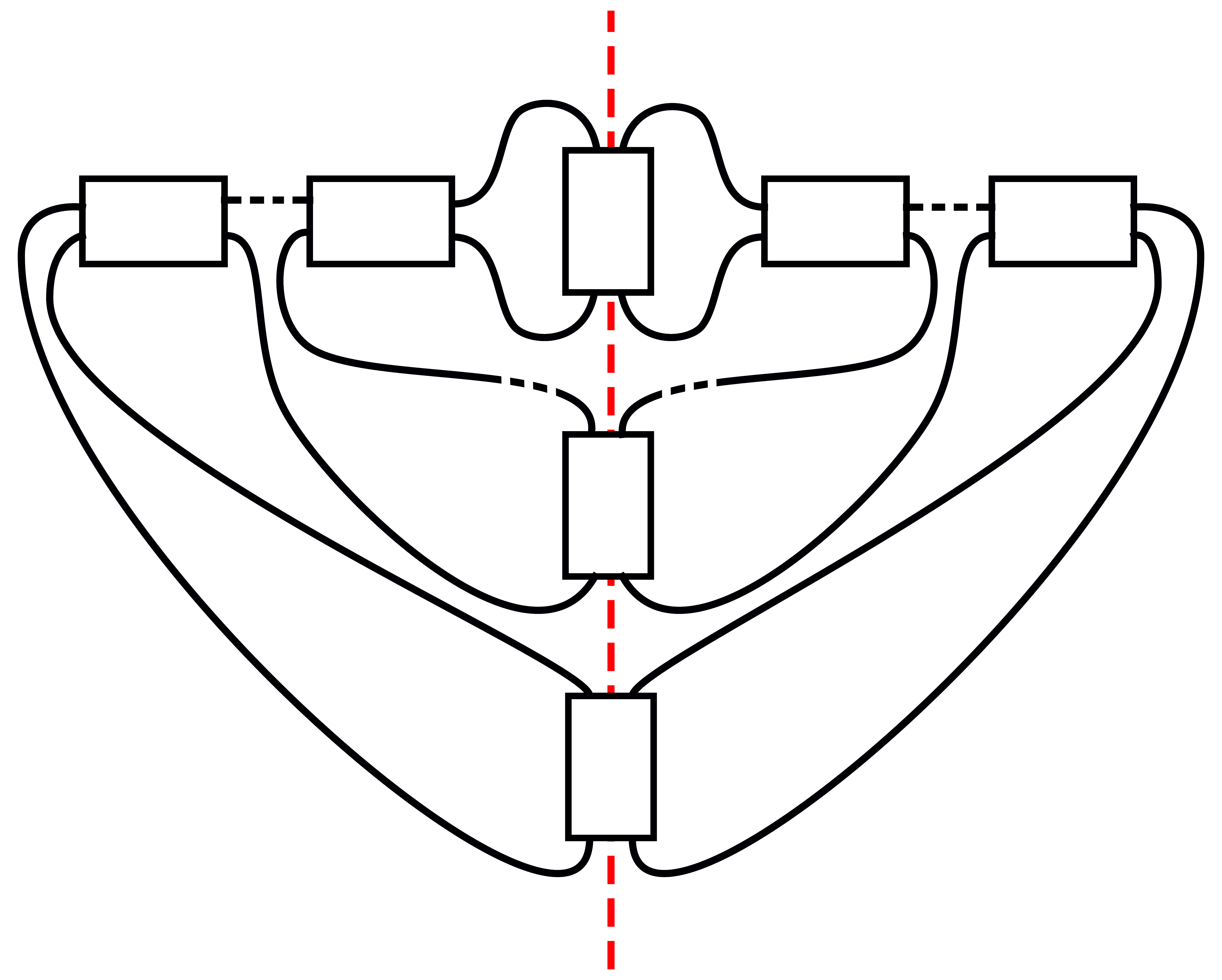}};
   		\draw (1.07,4.72) node (a) {$-c_n$};
   		\draw (2.49,4.72) node (a) {$-c_1$};
   		\draw (6.58,4.72) node (a) {$-c_n$};
   		\draw (5.2,4.72) node (a) {$-c_1$};
            \draw (3.83,4.67) node (a) {$\alpha_1$};
            \draw (3.81,2.95) node (a) {$\alpha_n$};
            \draw (3.81,1.45) node (a) {-$b$};
            \draw (6,0.45) node (a) {$b = \sum_{i=1}^n\alpha_i$};
           \end{tikzpicture}
           \caption{}
           \label{butterfly}
        \end{subfigure}
   	\caption{Strong inversion on a $2$-bridge knot and construction of the butterfly link.}
   	\label{dom}
   	\end{figure} 
\begin{prop}\label{prop:formula}
    Let $I_1(\alpha_1,\dots,\alpha_n;c_1\dots,c_n)$ be a diagram for the directed strongly invertible knot $K=K(p,q)$.
    Then the butterfly polynomial of $K$ is given by:
    \[ \eta_{L_b(K)}(t) = \sum_{i=1}^n c_i \left( t^{\sigma_i} + t^{-\sigma_i}\right) -2\sum_{i=1}^n c_i, \]
    where $\sigma_i = 1/2\sum_{j=1}^i \alpha_j$. 
    \begin{proof}

        Figure~\ref{dom} shows the construction of the butterfly link $L_b(K)= K_0 \sqcup K_1$ starting from $I_1(\alpha_1,\dots,\alpha_n;c_1\dots,c_n)$.
        We only add a box of $-2\sigma_n$ crossings, so that $lk(K_0,K_1)=0$.  
        Applying a sequence of flypes we see that the diagrams in Figure~\ref{label} also represent the link $L_b(K)$. 

     \begin{figure}[ht]
        \centering
         \begin{tikzpicture}
          \draw (0,0) node[above right]{\includegraphics[width=0.65\linewidth]{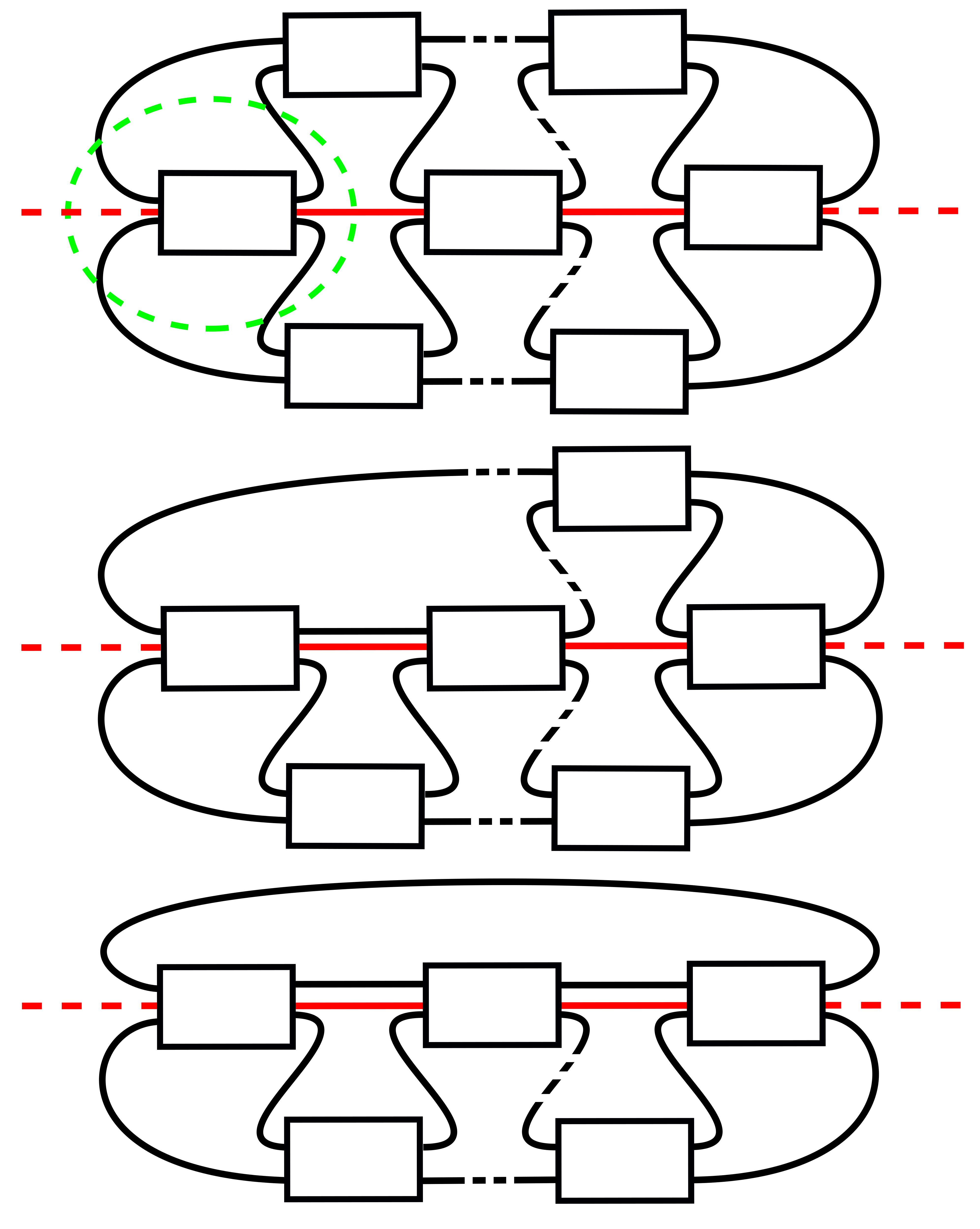}};

        \draw (3.65,11.81) node (a) [scale=1.2] {$-c_1$};
   		\draw (6.38,11.81) node (a) [scale=1.2] {$-c_n$};
   		\draw (3.65,8.69) node (a) [scale=1.2] {$-c_1$};
   		\draw (6.38,8.67) node (a) [scale=1.2] {$-c_n$};
        \draw (2.43,10.21) node (a) [scale=1.2] {$\alpha_1$};
        \draw (5.1,10.21) node (a) [scale=1.2] {$\alpha_2$};
        \draw (7.55,10.26) node (a) [scale=1.2] {$-b$};
      
        \draw (6.38,7.52) node (a) [scale=1.2] {$-c_n$};
   		\draw (3.65,4.35) node (a) [scale=1.2] {-$2c_1$};
   		\draw (6.38,4.35) node (a) [scale=1.2] {$-c_n$};
        \draw (2.43,5.9) node (a) [scale=1.2] {$\alpha_1$};
        \draw (5.1,5.9) node (a) [scale=1.2] {$\alpha_2$};
        \draw (7.55,5.95) node (a) [scale=1.2] {$-b$};
      
        \draw (2.43,2.37) node (a) [scale=1.2] {$\alpha_1$};
   		\draw (5.1,2.37) node (a) [scale=1.2] {$\alpha_2$};
   		\draw (3.65,0.79) node (a) [scale=1.2] {-$2c_1$};
   		\draw (6.42,0.79) node (a) [scale=1.2] {-$2c_n$};
   		\draw (7.55,2.37) node (a) [scale=1.2] {$-b$};
        \draw (9.2,0.45) node (a) {$b = \sum_{i=1}^n\alpha_i$};
           
           \end{tikzpicture}
           \caption{Flypes on the butterfly link.}
        \label{label}
     \end{figure}

Since the link $L_b(K)$ has unknotted components (see Figure \ref{label}), we can compute the $\eta$-function drawing a fundamental domain (Figure~\ref{fundom}) and applying the algorithm previously described.
        
        \begin{figure}[ht]
        \centering
         \begin{tikzpicture}
          \draw (0,0) node[above right]{\includegraphics[width=0.75\linewidth]{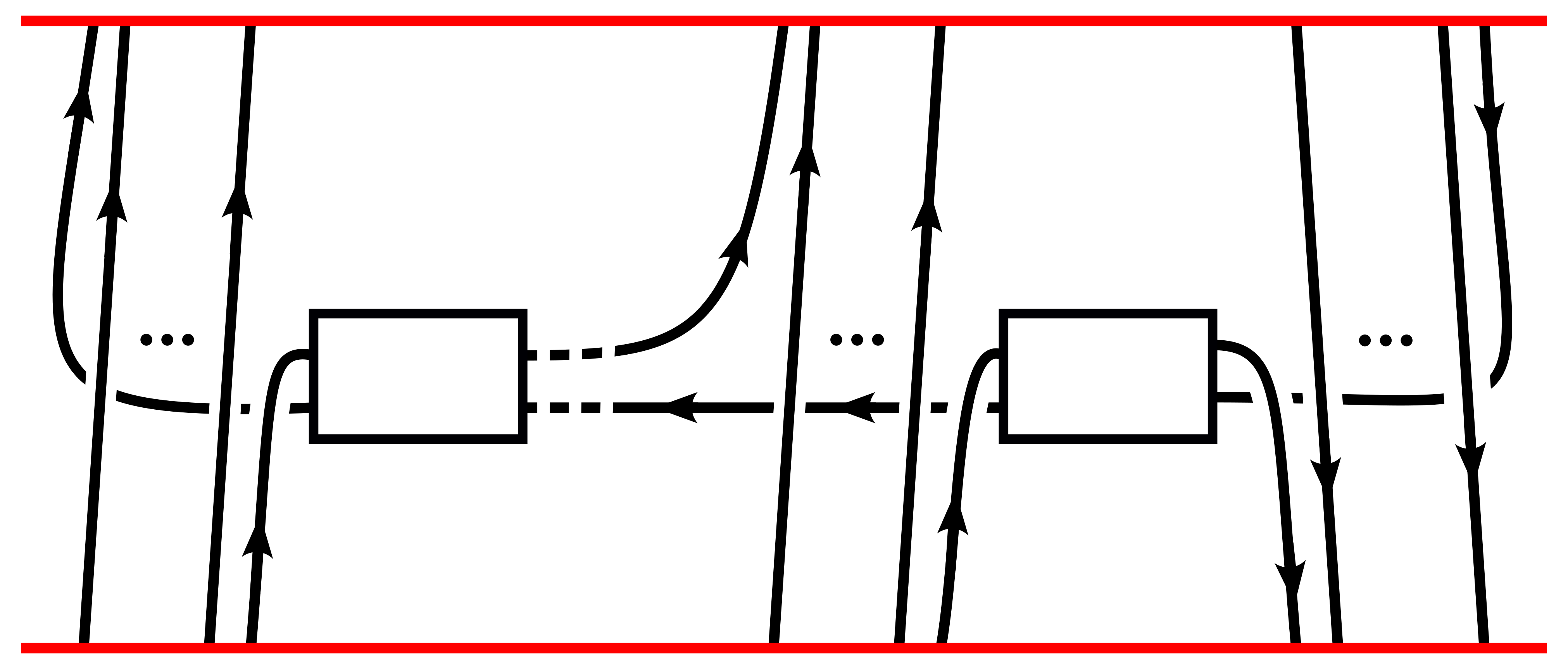}};
   		\draw (3.18,2.22) node (a) [scale=1.2] {-$2c_1$};
   		\draw (8.1,2.22) node (a) [scale=1.2] {-$2c_n$};
   		\draw (10.85,5.1) node (a) {$0$};
            \draw (0.55,0) node (a) {sign$(\alpha_1)$};
            \draw (2.55,0.8) node (a) {$\alpha_1/2$};      
            \draw (5.4,0) node (a) {$\sigma_{n-1}$+sign($\alpha_n$)};
            \draw (7.3,0.8) node (a) {$\sigma_{n}$};
            \draw (11.58,1.05) node (a) {sign($\sigma_{n}$)};
            \draw (9.2,5.1) node (a) {$\sigma_{n}-$sign($\sigma_{n}$)};
           \end{tikzpicture}
           \caption{Fundamental domain with labelled oriented arcs.}
           \label{fundom}
        \end{figure}

        We start by labelling and orienting the arcs.
        Call $\gamma$ the top-right arc, labelled zero. The arc $\gamma$ runs across each $-2c_i$ box reaching the left side of the domain. If $\alpha_1>0$, $\gamma$ rises and ends in the upper horizontal bar. Then we will run across $\alpha_1/2$ arcs riding from the lower bar to the upper bar, labelled with increasing indexes. If $\alpha_1<0$, $\gamma$ descends ending in the lower horizontal bar and the following $\alpha_1/2$ arcs ride from the upper bar to the lower one with decreasing labels. This means that in both cases the last arc of this group is labelled by $\alpha_1/2$. 

        This goes for all the groups of $\alpha_i/2$ vertical arcs: in each group the labels increase or decrease by one and the arc entering the $-2c_i$ box, after the group of $\alpha_i/2$ vertical arcs, is labelled by $\sum_{j=1}^i \alpha_j/2 = \sigma_i$.

        The arc exiting the $n$-th box is labelled by $\sigma_n = \sum_{i=1}^n \alpha_i/2$. The last group of vertical arcs consists of exactly $|2\sigma_n|$ arcs oriented upwards or downwards depending on whether $\sigma_n<0$ or $\sigma_n>0$. It follows that the labels increase, or decrease, by one until they reach $0$, at this point we meet the first arc, which is already oriented and labelled.

        At last, we count the crossings.
        We must count the crossings in the groups of vertical arcs and in the boxes for $i\in\{1,\dots,n\}$ and in the last group of $|2\sigma_n|$ vertical arcs.
        For each $i\in\{1,\dots,n\}$ in the $i$-th box we find that the two strands run in opposite direction, one labelled $0$ and one labelled $\sigma_i$. Hence we count:
        \begin{itemize}
            \item $|c_i|$ crossings with $\epsilon=\sign(c_i)$ and $d=\sigma_i$;
            \item $|c_i|$ crossings with $\epsilon=\sign(c_i)$ and $d=-\sigma_i$.
        \end{itemize}
        In the $i$-th group of vertical arcs we count one crossing with $\epsilon=\sign(\alpha_i)$ and $d = k$ for each $k\in\{ \sigma_{i-1} + \sign(\alpha_i),\dots, \sigma_i \}$.
        Finally, in the last group of vertical arcs we find one crossing with $\epsilon=-\sign(\sigma_n)$ and $d = k$ for each $k\in\{ \sign(\sigma_n),\dots,\sigma_n \}$.
        Observe that the count of the crossings on the vertical groups of arcs for $i\in\{1,\dots,n\}$ simplify with the count of the crossings on the last group of vertical arcs:
        \[ \sum_{i=1}^n \Big( \sign(\alpha_i) t^{\sigma_{i-1}}\sum_{k=\sign(\alpha_i)}^{\alpha_i/2}  t^{k} \Big) -\sign(\sigma_n) \sum_{k=\sign(\sigma_n)}^{\sigma_n}  t^{k} = 0. \]
        This means that:
        \[ \overline{\eta}(t) = \sum_{i=1}^n c_i\left( t^{\sigma_i} + t^{-\sigma_i} \right) \quad\text{and}\quad \overline{\eta}(1) = 2\sum_{i=1}^n c_i, \]
        hence $\eta(t)= \sum_{i=1}^n c_i \left( t^{\sigma_i} + t^{-\sigma_i} \right) -2\sum_{i=1}^n c_i$.
    \end{proof}
\end{prop}

 In analogy with Sakuma's result \cite[Theorem II]{sakuma} we can observe the following corollary to Proposition~\ref{prop:formula}.

 \begin{cor}\label{surjectivity}
 Every Laurent polynomial in $\Z\langle t\rangle$ is realized as butterfly polynomial of some directed strongly invertible knot.
 \begin{proof}
     Notice that the $\eta$-function of the butterfly link of the $2$-bridge knot $K_n = I_1(2n;1)$ is
     \[ \eta(t) = t^n + t^{-n} -2, \]
     and these form a set of generators for $\mathbb{Z}\langle t\rangle$.
 \end{proof}
 \end{cor}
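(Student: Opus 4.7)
The plan is straightforward once Proposition~\ref{prop:formula} is in hand. First, I would apply the formula to the specific family $K_n = I_1(2n;1)$ for $n \in \Z \setminus \{0\}$: there is a single pair $(\alpha_1, c_1) = (2n, 1)$, so $\sigma_1 = n$ and the formula yields
\[ \eta_{L_b(K_n)}(t) = t^{n} + t^{-n} - 2. \]
(For $n<0$ one recovers the same polynomial as for $|n|$, so in practice one only needs $n \geq 1$.)

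Next I would verify that the family $\{t^n + t^{-n} - 2\}_{n \geq 1}$ is a generating set of $\Z\langle t\rangle$ as an abelian group. This is essentially by definition: an element $f(t) \in \Z\langle t\rangle$ satisfies $f(t) = f(t^{-1})$ and $f(1) = 0$, so it may be written uniquely as $f(t) = \sum_{n \geq 1} a_n t^n + \sum_{n \geq 1} a_n t^{-n} + a_0$ with $a_0 = -2\sum_{n\geq 1} a_n$, i.e.
\[ f(t) = \sum_{n \geq 1} a_n \bigl( t^n + t^{-n} - 2 \bigr), \]
with only finitely many nonzero coefficients $a_n \in \Z$.

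Finally, I would invoke the fact recalled just before Corollary~\ref{surjectivity} that the butterfly polynomial descends to a group homomorphism $\eta(L_b(-)) \colon \C \to \Q(t)$. Given any $f(t) \in \Z\langle t\rangle$, write it as above and realize it as
\[ \eta_{L_b(J)}(t) = f(t), \qquad \text{where } J = \widetilde{\#}_{n \geq 1} \,\bigl(\pm K_n\bigr)^{\widetilde{\#}\, |a_n|}, \]
with the sign of each summand chosen according to the sign of $a_n$ (using the inverse in $\C$ for negative coefficients). This concludes the proof.

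There is no real obstacle: the computational content is absorbed into Proposition~\ref{prop:formula}, the spanning statement is immediate from the definition of $\Z\langle t\rangle$, and the homomorphism property converts spanning into surjectivity. The only point requiring a moment of thought is that one genuinely needs the homomorphism property (not just additivity on a subset) in order to obtain negative coefficients, which is why the remark that $\eta(L_b(-))$ is a group homomorphism is essential.
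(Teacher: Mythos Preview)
Your proposal is correct and follows essentially the same approach as the paper: apply Proposition~\ref{prop:formula} to $K_n=I_1(2n;1)$ to obtain $t^n+t^{-n}-2$, and observe that these generate $\Z\langle t\rangle$. Your write-up is simply more explicit than the paper's, in particular in spelling out why $\{t^n+t^{-n}-2\}_{n\geq 1}$ spans $\Z\langle t\rangle$ and in invoking the homomorphism property of $\eta(L_b(-))$ to handle negative coefficients via inverses in $\C$.
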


\begin{remark}
Notice that using only the formula of Proposition \ref{prop:formula} we are not able to deduce that no $2$-bridge knot is equivariantly slice. As an example, observe that the buttefly polynomial vanishes on the family of directed strongly invertible knots given by $I_1(2a,-2a,2a,-2a;b,c,-b,d)$, with $a,b,c,d\in\Z\setminus\{0\}$.
\end{remark}
\section{No two-bridge knot is equivariantly slice}\label{sec:2bri}
In this section we give two different proofs of the following proposition.
\begin{prop}\label{no_2_slice}
The strongly invertible knot $K=I_1(\alpha_1,\dots,\alpha_n;c_1,\dots,c_n)$ is not equivariantly slice.
\end{prop}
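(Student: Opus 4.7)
The plan is first to apply Corollary \ref{iff_equiv_slice}, which translates the question into whether the butterfly link $L_b(K)$ is equivariantly slice as a $2$-periodic $2$-link. Both proofs will then produce a concrete obstruction to this slicing, starting from the explicit $I_1(\alpha_1,\dots,\alpha_n;c_1,\dots,c_n)$ diagram and its butterfly link depicted in Figure~\ref{butterfly}.

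The first proof would use the axis-linking number of Boyle--Issa, which in this setting takes the form $\lk(K_0,\Fix(\rho))$, where $K_0$ is one component of the butterfly link. This quantity is an equivariant concordance invariant that vanishes on equivariantly slice directed strongly invertible knots. The strategy is to read off the signed intersections of $K_0$ with a half-disc bounded by the fixed axis directly from the diagram: the twist boxes containing $-c_i$ crossings and the $\alpha_i$ half-twists along the axis each contribute in a controlled way, and the computation should reduce to a modular arithmetic statement, which I expect to match the fact that the determinant $|p|$ of a $2$-bridge knot $K(p,q)$ is odd, forcing the axis-linking number to be non-zero.

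The second proof would instead bound from below the nullity of the butterfly link and compare it with the slice bound for a $2$-component link of linking number zero. The relevant input is the order of vanishing of the Alexander-type invariants of $L_b(K)$ at $t=1$; any excess over what is forced by linking number zero obstructs topological, hence equivariant, sliceness. I would compute this from the fundamental domain picture used in Proposition~\ref{prop:formula}, pushing the algorithm one step further to extract the nullity rather than just the Laurent polynomial $\eta$. This approach has the virtue of working even when the butterfly polynomial itself vanishes, as for the family $I_1(2a,-2a,2a,-2a;b,c,-b,d)$ highlighted in the remark preceding the statement.

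The main obstacle in both approaches is uniformity: one must show the obstruction is genuinely non-zero for \emph{every} choice of $\alpha_i\in 2\Z\setminus\{0\}$ and $c_i\in\Z\setminus\{0\}$, and for both inequivalent strong inversions in the hyperbolic case. For the axis-linking number this should reduce to a direct parity count, which is the easier of the two. For the nullity, the obstacle is finer, since one must extract structural information about the Alexander module of $L_b(K)$ that is not visible from the butterfly polynomial alone; this is where I expect the bulk of the bookkeeping to lie, essentially a careful analysis of how the boxes in Figure~\ref{fundom} contribute to the rank of the Seifert pairing on the infinite cyclic cover.
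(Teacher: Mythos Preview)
Both sketches aim at the right invariants but each contains a genuine gap.

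\textbf{Axis-linking number.} Your expectation that a parity argument tied to the oddness of the determinant forces $\widetilde{\lk}(K)\neq 0$ is incorrect: the axis-linking number can vanish for both choices of direction. In the paper's computation one finds
\[
\widetilde{\lk}(K_n)=\sum_{i=1}^n(\varepsilon_i^n-1)\alpha_i,\qquad
\widetilde{\lk}(aK_n)=\sum_{i=1}^n\varepsilon_i^n\alpha_i,
\]
and both are zero precisely when $\sum_i\alpha_i=\sum_i\varepsilon_i^n\alpha_i=0$, which certainly occurs (e.g.\ for suitable $I_1(2,-2;c_1,c_2)$). The missing idea is a reduction step: when both quantities vanish one shows that $L_b(K_n)$ is equivariantly isotopic to $L_b(K_{n-1})$, and for $K_{n-1}$ one has $\widetilde{\lk}(aK_{n-1})=\mp\varepsilon_n^n\alpha_n\neq 0$ since $\alpha_n\neq 0$. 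Without this inductive step the argument does not close.

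\textbf{Nullity.} Here the strategy is inverted and the method is unnecessarily heavy. The obstruction is not an \emph{excess} of nullity: one shows $n(L_b(K))=1$, which is \emph{smaller} than the nullity $2$ of the two-component unlink, so $L_b(K)$ cannot be concordant to it. The key observation you are missing is that the flype picture already exhibits $L_b(K)$ as a $2$-bridge link, with continued fraction $[\alpha_1,-2c_1,\dots,\alpha_n,-2c_n,-\sum_i\alpha_i]$. Its double branched cover is therefore a lens space, and a short continued-fraction manipulation shows the associated rational number is non-zero, hence $\Sigma(L_b(K))$ is a rational homology sphere and $n(L_b(K))=1$. No analysis of the Seifert pairing on the infinite cyclic cover is needed; the fundamental-domain picture from Proposition~\ref{prop:formula} computes $\eta$, not the nullity, and pushing that algorithm further would not yield what you want.
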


The first proof relies on the \emph{axis-linking number} introduced by Boyle and Issa \cite{boyle2021equivariant}, while in the second one we use the \emph{nullity} of the butterfly link as an obstruction to equivariant sliceness.

\subsection{Linking number proof}
\begin{defn}\cite[Definition 4.6]{boyle2021equivariant}
 Let $(K,h,\rho)$ be a directed strongly invertible knot and let $L_b(K)$ be its butterfly link. The \emph{axis-linking number} $\widetilde{\lk}(K)$ is the linking number between one component of $L_b(K)$ and the oriented fixed axis.
\end{defn}
Recall that $\widetilde{\lk}$ is an obstruction to equivariant sliceness, since it induces a group homomorphism $\widetilde{lk}:\C\longrightarrow\Z$ \cite[Proposition 10]{boyle2021equivariant}.

  \begin{proof}[Proof of Proposition \ref{no_2_slice}]
Let $K_n$ be the strongly invertible knot $K$ endowed with the direction described in Figure \ref{I1}, and let $aK_n$ be its antipode. 
      For each $i=1,\dots,n$ let $\delta_i\in\{0,1\}$ such that $c_i = 2k_i + \delta_i$ for some $k_i\in\mathbb{Z}$. For each $i=1,\dots,n$ set $\varepsilon_i^n = \prod_{j=i}^n (-1)^{\delta_i}$.
      A rapid computation shows that
      \[ \widetilde{\lk}(K_n) = \sum_{i=1}^n(\varepsilon_i^n - 1)\alpha_i \quad\text{and}\quad \widetilde{\lk}(aK_n) = \sum_{i=1}^n \varepsilon_i^n \alpha_i. \]
      Figure~\ref{come_butter} shows the two butterfly links.
    \begin{figure}[ht]
       \begin{subfigure}{\textwidth}
       \centering
       \begin{tikzpicture}
          \draw (0,0) node[above right]{\includegraphics[width=\linewidth]{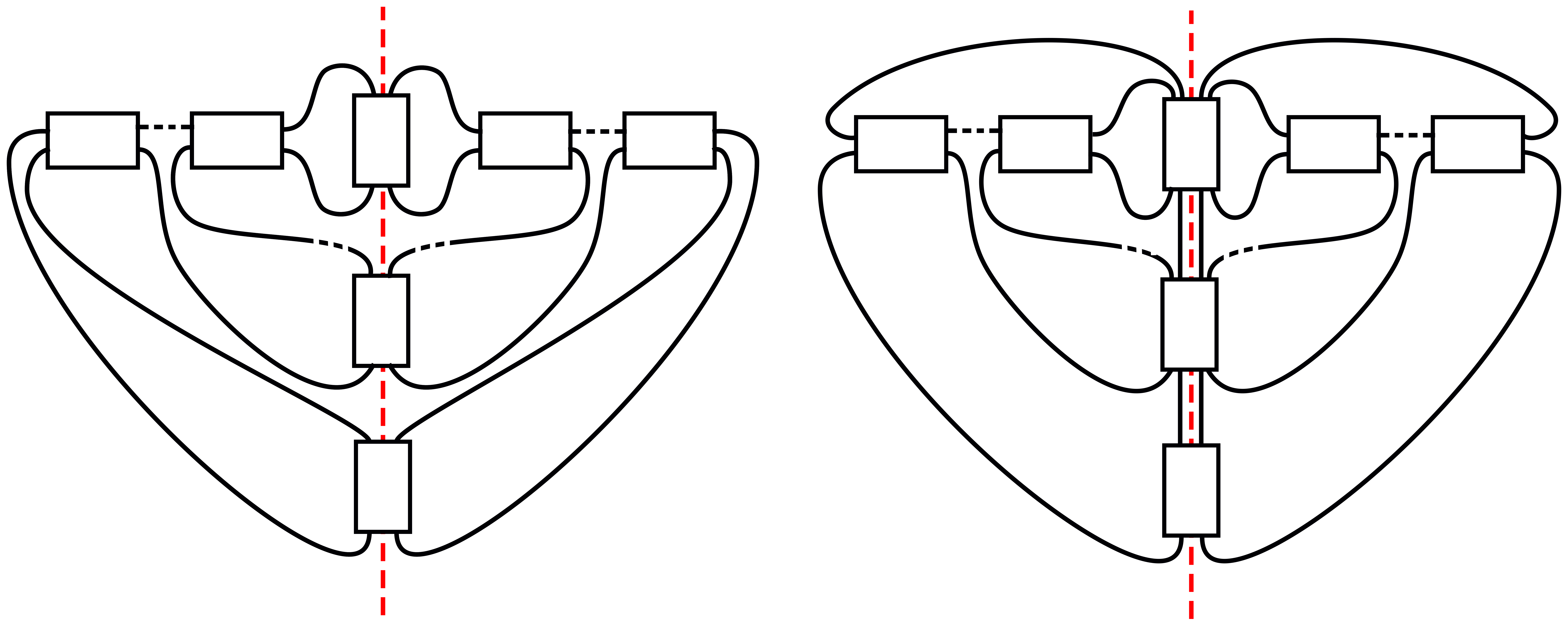}};
          
        \draw (1.01,4.78) node (a) {$-c_n$};
        \draw (2.4,4.78) node (a) {$-c_1$};
        \draw (3.81,4.75) node (a) {$\alpha_1$};
        \draw (5.15,4.78) node (a) {$-c_1$};
        \draw (6.59,4.78) node (a) {$-c_n$};
        \draw (3.79,3) node (a) {$\alpha_n$};
        \draw (3.82,1.47) node (a) {-$b$};
          
        \draw (8.76,4.78) node (a) {$-c_n$};
        \draw (10.15,4.78) node (a) {$-c_1$};
        \draw (11.56,4.75) node (a) {$\alpha_1$};
        \draw (12.9,4.78) node (a) {$-c_1$};
        \draw (14.34,4.78) node (a) {$-c_n$};
        \draw (11.54,3) node (a) {$\alpha_n$};
        \draw (11.55,1.47) node (a) {-$b$};
     \end{tikzpicture}
     \caption{$L_b(K_n)$ and $L_b(K_n')$.}
    \label{come_butter}
    \end{subfigure}
       
       \begin{subfigure}{\textwidth}
       \centering
       \begin{tikzpicture}
          \draw (0,0) node[above right]{\includegraphics[width=0.5\linewidth]{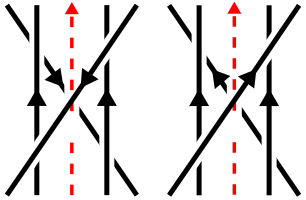}};
          
        \draw (1.85,1.8) node (a) [scale=1.1] {$+1$};
        \draw (0.55,3.8) node (a) [scale=1.1] {$-1$};
        \draw (0.55,1.35) node (a) [scale=1.1] {$-1$};
        \draw (3.15,3.8) node (a) [scale=1.1] {$-1$};
        \draw (3.15,1.35) node (a) [scale=1.1] {$-1$};
        
        \draw (5.9,1.8) node (a) [scale=1.1] {$+1$};
        \draw (4.5,3.8) node (a) [scale=1.1] {$+1$};
        \draw (4.5,1.35) node (a) [scale=1.1] {$+1$};
        \draw (7.2,3.8) node (a) [scale=1.1] {$+1$};
        \draw (7.2,1.35) node (a) [scale=1.1] {$+1$};
     \end{tikzpicture}
     \caption{Crossings in the butterfly link.}
    \label{incroci}
    \end{subfigure}
    \caption{}
    \end{figure}
      In the case where the fixed half-axis is the one going through infinity the computation is obvious. For the other choice of half-axis, Figure~\ref{incroci} shows the crossings we get in correspondence of each crossing inside the $\alpha_i$-boxes. 
      Since being equivariantly slice does not depend on the choice of the direction, it follows that $K_n$ cannot be equivariantly slice if either $\widetilde{\lk}(K_n)$ or $\widetilde{\lk}(aK_n)$ is non-zero.

      If both of these invariants are zero we get that $b=\sum_{i=1}^n \alpha_i =\sum_{i=1}^n\varepsilon_i^n\alpha_i = 0$. Let $K_{n-1}$ be the strongly invertible knot $I_1(\alpha_1,\dots,\alpha_{n-1};c_1,  \dots,c_{n-1})$. 
      Figure~\ref{labello} shows that under this assumption $L_b(K_n)$ and $L_b(K_{n-1})$ are equivariantly isotopic (as $2$-periodic $2$-component links).
      
     \begin{figure}[ht]
        \centering
         \begin{tikzpicture}
          \draw (0,0) node[above right]{\includegraphics[width=0.85\linewidth]{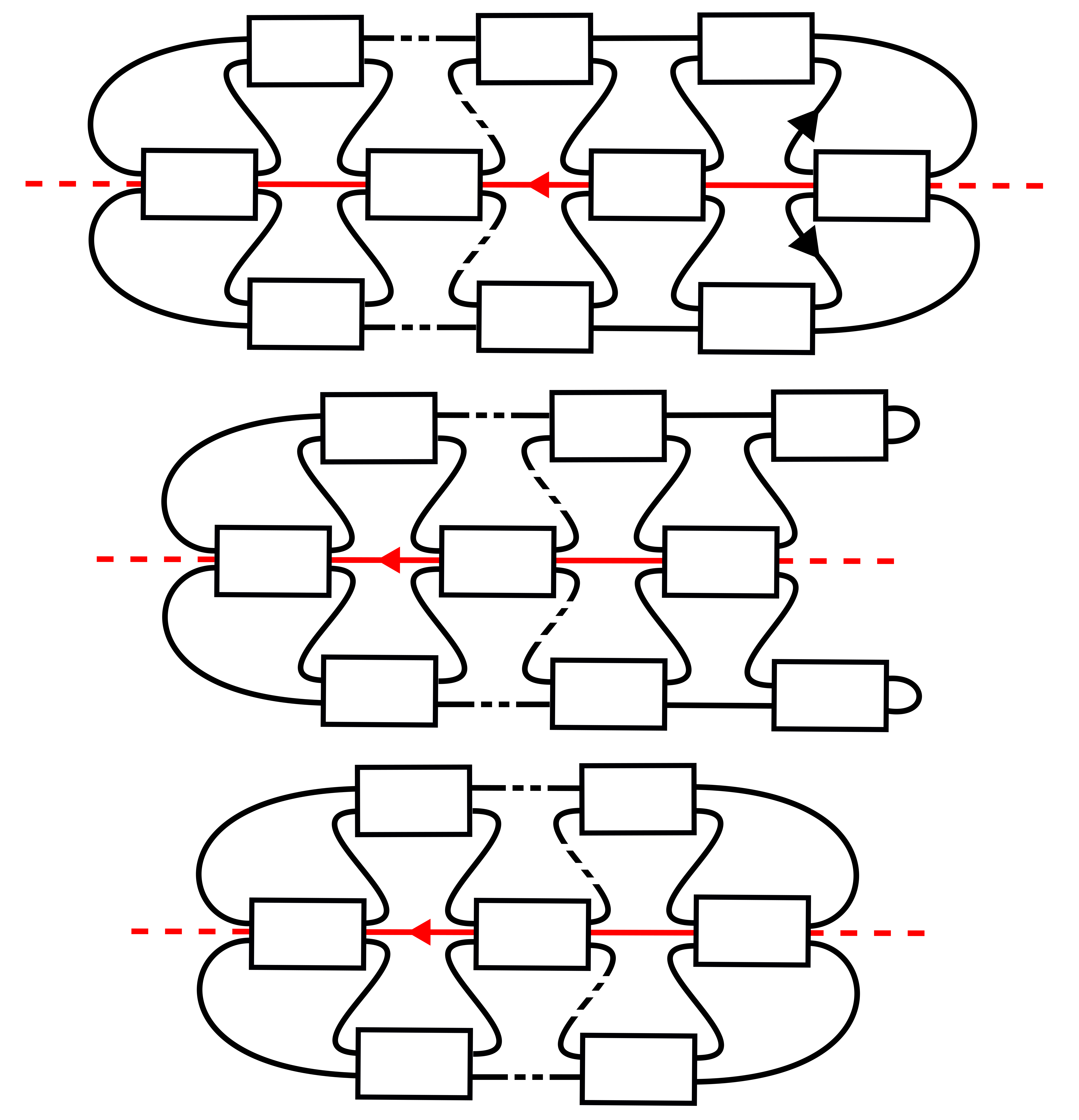}};
   		\draw (3.8,12.77) node (a) [scale=1.3] {-$c_1$};
        \draw (6.5,12.77) node (a) [scale=1.3] {-$c_{n-1}$};
        \draw (9.17,12.77) node (a) [scale=1.3] {-$c_n$};
   		\draw (3.8,9.67) node (a) [scale=1.3] {-$c_1$};
        \draw (6.5,9.67) node (a) [scale=1.3] {-$c_{n-1}$};
        \draw (9.17,9.65) node (a) [scale=1.3] {-$c_n$};
        
   		\draw (4.6,5.15) node (a) [scale=1.3] {-$c_1$};
        \draw (7.35,5.15) node (a) [scale=1.3] {-$c_{n-1}$};
        \draw (9.97,5.15) node (a) [scale=1.3] {-$c_n$};
   		\draw (4.6,8.3) node (a) [scale=1.3] {-$c_1$};
        \draw (7.35,8.3) node (a) [scale=1.3] {-$c_{n-1}$};
        \draw (9.97,8.3) node (a) [scale=1.3] {-$c_n$};
        
   		\draw (5,3.85) node (a) [scale=1.3] {-$c_1$};
        \draw (7.75,3.85) node (a) [scale=1.3] {-$c_{n-1}$};
        \draw (5,0.75) node (a) [scale=1.3] {-$c_1$};
        \draw (7.75,0.7) node (a) [scale=1.3] {-$c_{n-1}$};
        
   		\draw (2.465,11.2) node (a) [scale=1.3] {$\alpha_1$};
        \draw (5.15,11.2) node (a) [scale=1.3] {$\alpha_2$};
        \draw (7.835,11.2) node (a) [scale=1.3] {$\alpha_{n}$};
        \draw (10.43,11.2) node (a) [scale=1.3] {$0$};
        
        \draw (3.3,6.73) node (a) [scale=1.3] {$\alpha_1$};
        \draw (5.97,6.73) node (a) [scale=1.3] {$\alpha_2$};
        \draw (8.64,6.73) node (a) [scale=1.3] {$\alpha_{n}$};
        
        \draw (3.8,2.3) node (a) [scale=1.3] {$\alpha_1$};
        \draw (6.47,2.3) node (a) [scale=1.3] {$\alpha_2$};
        \draw (9.11,2.3) node (a) [scale=1.3] {$\alpha_{n}$};
           \end{tikzpicture}
           \caption{Equivariant isotopy between $L_b(K_n) $ and $ L_b(K_{n-1})$.}
        \label{labello}
     \end{figure}
      Hence $\widetilde{\lk}(K_{n-1}) = 0$, but
      \[ \widetilde{\lk}(aK_{n-1}) = \sum_{i=1}^{n-1} \varepsilon_i^{n-1}\alpha_i = \pm \sum_{i=1}^{n-1} \varepsilon_i^n \alpha_i = \mp \varepsilon_n^n\alpha_n \neq 0, \]
      since $\alpha_n\neq0$ by the assumption on the continued fraction of $K_n$.
      This implies that $K_{n-1}$ and $aK_{n-1}$ are not equivariantly slice and hence by Corollary \ref{iff_equiv_slice} that $L_b(K_{n-1})=L_b(K_n)$ is not strongly equivariantly slice. By Proposition \ref{b_link_concordance} or \cite[Proposition 7]{boyle2021equivariant} it follows that $K_n$ is not equivariantly slice. 
  \end{proof}

\begin{remark}
Even though the proof above of Proposition \ref{no_2_slice} uses a homomorphism to prove that a $2$-bridge knot $K$ is not equivariantly slice, it is not clear if it can be adapted to show that $K$ has infinite order in $\C$.
\end{remark}

\subsection{Nullity proof}\label{nullity_proof}
Let $L\subset S^3$ be a link, and denote by $\Sigma(L)$ the $2$-fold cover of $S^3$ branched over $L$.
Recall that the \emph{nullity} of $L$ is defined as
$$n(L)=1+\dim(H_1(\Sigma(L),\Q)),$$
and that the nullity is an invariant for link concordance, as shown in \cite{kauffman1976signature}.

\begin{proof}[Proof of Proposition \ref{no_2_slice}]
Consider on $K$ the direction specified in Figure \ref{I1}.
Recall that the fraction associated with $K$ is
$$p/q=[\alpha_1,-2c_1,\dots,\alpha_n,-2c_n].$$
Observe from Figure \ref{label} that $L_b(K)$ is a $2$-bridge link with continued fraction $[\alpha_1,-2c_1,\dots,\alpha_n,-2c_n,-\sum_{i=1}^n\alpha_i]$.

Notice that the continued fraction $[-\sum_{i=1}^n\alpha_i,-2c_n,\alpha_n,\dots,-2c_1,\alpha_1]$ gives the same $2$-bridge link and that the associated rational number is
$$p''/q''=-\sum_{i=1}^n\alpha_i+q'/p',$$
where $p'/q'=[-2c_n,\alpha_n,\dots,-2c_1,\alpha_1]$.
It is a well known fact (see \cite[Theorem 4]{kauffman2002classification} or \cite[Exercise 2.1.12]{kawauchisurvey}) that $p=p'$ and $q'$ is such that $q\cdot q'\equiv -1 \mod{p}$.
It follows that $p''/q''\in\Q\setminus\{0\}$. Since the $2$-fold cover $\Sigma(L_b(K))$ of $S^3$ branched over $L_b(K)$ is a lens space and $p''/q''\neq0$, we have that $\Sigma(L_b(K))$ is a rational homology $S^3$.
In particular the nullity of $L_b(K)$ is $n(L_b(K))=1$.
Since the nullity of the $2$-component unlink is easily seen to be $2$, it follows that $L_b(K)$ is not concordant to the unlink.
Therefore $K$ is not equivariantly slice.
\end{proof}

\section{A new invariant of equivariant concordance}\label{sec:moth}
Recall that a \emph{semi-orientation} on a link $L$ is the choice of an orientation on each component of $L$, up to reversing the orientation on all components simultaneously.

\begin{defn}
Let $K$ be a directed strongly invertible knot. We define $\widehat{L_b}(K)$ to be the $2$-periodic, semi-oriented link obtained by endowing $L_b(K)$ with the opposite semi-orientation.
\end{defn}

Observe that $\widehat{L_b}(K)$ is obtained from $K$ via a band move coherent with the (unique) semi-orientation of $K$.
Viceversa, we can attach another equivariant band $B$ to $\widehat{L_b}(K)$ obtaining again $K$ (see Figure~\ref{moth}).

\begin{figure}[ht]
    \centering
    \begin{tikzpicture}
         \draw (0,0) node[above right]{\includegraphics[width=\linewidth]{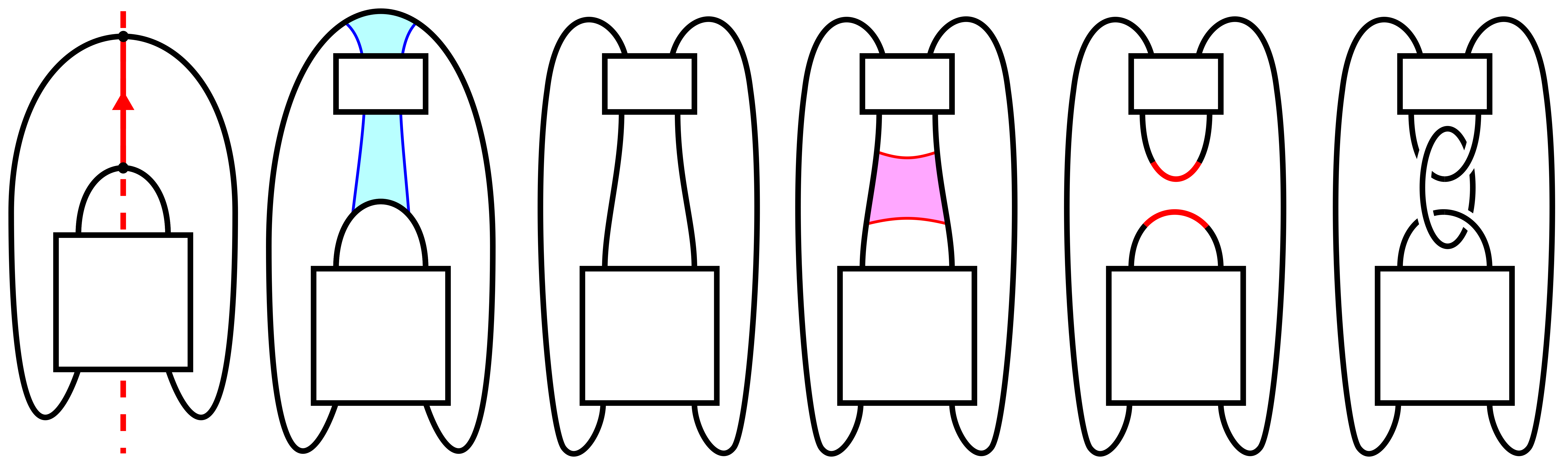}};
   		\draw (1.3,1.65) node (a) [scale=1.3] {$K$};
        \draw (3.75,1.3) node (a) [scale=1.3] {$K$};
        \draw (6.35,1.3) node (a) [scale=1.3] {$K$};
        \draw (8.8,1.3) node (a) [scale=1.3] {$K$};
        \draw (11.35,1.3) node (a) [scale=1.3] {$K$};
        \draw (13.95,1.3) node (a) [scale=1.3] {$K$};

        \draw (13.47,2.7) node (a) [scale=1.3] {$U$};
        \draw (8.8,2.72) node (a) [scale=1.2] {$B$};
        
        \draw (3.75,3.72) node (a) [scale=1] {$-b$};
        \draw (6.35,3.72) node (a) [scale=1] {$-b$};
        \draw (8.8,3.72) node (a) [scale=1] {$-b$};
        \draw (11.35,3.72) node (a) [scale=1] {$-b$};
        \draw (13.95,3.72) node (a) [scale=1] {$-b$};

        \draw (6.35,-0.27) node (a) [scale=1] {$\widehat{L_b}(K)$};
        \draw (13.95,-0.3) node (a) [scale=1] {$L_m(K)$};
        
    \end{tikzpicture}
    \caption{Construction of the moth link.}
    \label{moth}
\end{figure}

\begin{defn}
We define the \emph{moth link of} $K$ to be the link $L_m(K)$ given by the union of $K$ and a meridian $U$ of the core of the band $B$, as described in Figure~\ref{moth}.
Observe that such meridian can be chosen so that $L_m(K)$ is a $2$-component strongly invertible link.
\end{defn}

\begin{remark}\emph{
Using the notation of \cite{kaiser1992strong}, we have that $L_m(K)$ is the \emph{strong fusion} of the link $\widehat{L_b}(K)$ along the band $B$.}
\end{remark}

\begin{prop}\label{prop:conc_inv}
Let $(K_0,\rho_0,h_0)$ and $(K_1,\rho_1,h_1)$ be two equivariantly concordant directed strongly invertible knots. Then $L_m(K)$ is equivariantly concordant to $L_m(J)$.
\end{prop}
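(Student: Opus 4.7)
The plan is to take the given equivariant concordance $C \subset S^3 \times I$ between $K_0$ and $K_1$ and augment it with a $\rho$-equivariant annulus $A_U$ joining the meridian components $U_0$ and $U_1$, disjoint from $C$. Since the moth link $L_m(K_i) = K_i \cup U_i$ merely adjoins the meridian $U_i$ to $K_i$ without modifying the knot itself, no band surgery on $C$ is needed (unlike in Proposition~\ref{b_link_concordance}); I only need to produce the meridian annulus and check equivariance and disjointness from $C$.

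Following the setup of the proof of Proposition~\ref{b_link_concordance}, let $\rho$ be the extension of $\rho_0 \sqcup \rho_1$ leaving $C$ invariant, let $A = \Fix(\rho)$ be the fixed annulus, and let $D$ be the component of $A \setminus (C \cap A)$ containing the half-axes $h_0$ and $h_1$. First I would choose a smooth embedded arc $\gamma \subset \mathrm{int}(D)$ from a midpoint $p_0$ of the core of the band $B_0$ (lying on $h_0$) to the analogous midpoint $p_1$ (lying on $h_1$). Next, using $\gamma \subset \Fix(\rho)$, I would take an equivariant tubular neighborhood $\nu\gamma \cong \gamma \times D^1 \times D^2$ on which $\rho$ acts trivially on the $D^1$ factor (tangent to $A$) and as $-\id$ on the $D^2$ factor (normal to $A$, and hence lying inside each time-slice $S^3 \times \{t\}$). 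Shrinking the fibres if necessary so that $\nu\gamma \cap C = \emptyset$, which is possible because $\gamma \subset \mathrm{int}(D)$ while $C \cap A \subset \partial D$, I would define
\[ A_U := \gamma \times \{0\} \times \partial D^2 \subset \nu\gamma, \]
a $\rho$-equivariant annulus whose two boundary circles are small meridians around $p_0$ and $p_1$ in $S^3 \times \{0\}$ and $S^3 \times \{1\}$ respectively.

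The hard part will be verifying that the boundary circles of $A_U$ are equivariantly isotopic (in the respective knot complements) to the prescribed meridians $U_i$ of the moth links, so that $C \cup A_U$ truly realises a concordance from $L_m(K_0)$ to $L_m(K_1)$. This should follow from the uniqueness, up to equivariant ambient isotopy, of a $\rho_i$-invariant meridian of the core of $B_i$ based at an interior point of the half-axis: both $U_i$ and $A_U \cap (S^3 \times \{i\})$ satisfy this description, so they agree after an equivariant ambient isotopy which can be incorporated into the concordance. After rounding corners along $\partial A_U$ to ensure smoothness of $C \cup A_U$, the union is the required equivariant concordance between $L_m(K_0)$ and $L_m(K_1)$.
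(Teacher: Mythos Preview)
Your approach is correct and is essentially the same idea as the paper's: augment the given concordance $C$ by an equivariant annulus of meridians running along an arc in the fixed disc $D\subset\Fix(\rho)$. The paper arrives at this annulus via a small detour: it first passes to the butterfly--link concordance $C_b$ using the embedded cube $E\cong D^1\times D^1\times D^1$ from Proposition~\ref{b_link_concordance}, then undoes the band move on a thinner cube $E_\epsilon\subset E$ to recover (an isotopic copy of) $C$, and finally adjoins $\partial N_{|c}$, the boundary of a tubular neighbourhood of the central arc $c=0\times0\times D^1$, as the meridian annulus. Your $A_U=\gamma\times\{0\}\times\partial D^2$ plays exactly the role of $\partial N_{|c}$, and your $\gamma$ plays the role of $c$.

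The only substantive difference is bookkeeping: by routing through the band bundle $E$, the paper gets for free that the boundary circles of the meridian annulus are precisely the meridians $U_i$ of the band cores appearing in the definition of $L_m(K_i)$. You instead build $A_U$ directly and then invoke uniqueness (up to equivariant isotopy) of a $\rho_i$--invariant meridian of the half-axis to identify $\partial A_U$ with $U_0\sqcup U_1$. That uniqueness step is straightforward, so your more direct construction is perfectly fine; the paper's version just trades that small argument for reusing the machinery already set up in Proposition~\ref{b_link_concordance}.
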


\begin{proof}
Let $C\subset S^3\times I$ be a concordance between $(K_0,\rho_0,h_0)$ and $(K_1,\rho_1,h_1)$ equivariant with respect to an extension $\rho:S^3\times I\longrightarrow S^3\times I$ of $\rho_0\sqcup\rho_1$.
In the proof of Proposition \ref{b_link_concordance} we found an equivariant embedding of $E\cong D^1\times D^1\times D^1$ in $S^3\times I$ which intersects the concordance $C$ in $D^1\times\partial D^1\times D^1$ and such that $C_b=(C\setminus E)\cup\partial D^1\times D^1\times D^1$ exhibits an equivariant concordance between $L_b(K_0)$ and $L_b(K_1)$.

Now let $N$ be an tubular neighbourhood of $D^1\times 0\times D^1$ in $S^3\times I$ and let $c$ be the arc $0\times 0\times D^1$.
We can take $\epsilon>0$ small such that $E_\epsilon=D^1\times \epsilon D^1\times D^1\subset N$.
Then $(C_b\setminus E_\epsilon)\cup D^1\times \partial(\epsilon D^1)\times D^1\cup \partial N_{|c}$ is an equivariant concordance between $L_m(K_0)$ and $L_m(K_1)$.
\end{proof}

\begin{defn}
Let $K$ be a directed strongly invertible knot. We define the \emph{moth polynomial} of $K$ as the $\eta$-function of $L_m(K)=K\cup U$, taken with respect to the component $K$, i.e.
$$\eta(L_m(K))(t)=\eta(K,U;t).$$
\end{defn}

\begin{prop}
The moth polynomial induces a group homomorphism
$$\eta(L_m(-)):\C\longrightarrow\Q(t).$$
\end{prop}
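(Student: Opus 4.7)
My proof plan proceeds in two parts: establishing that $\eta(L_m(-))$ descends to a well-defined map on $\C$, and then verifying that it respects the group operation. For the first part, suppose $K_0$ and $K_1$ are equivariantly concordant directed strongly invertible knots. Proposition~\ref{prop:conc_inv} furnishes an equivariant concordance between the moth links $L_m(K_0)=K_0\cup U_0$ and $L_m(K_1)=K_1\cup U_1$; by construction, this concordance is the disjoint union of an annulus between $K_0$ and $K_1$ and an annulus between $U_0$ and $U_1$, so $L_m(K_0)$ and $L_m(K_1)$ are in particular topologically concordant as \emph{ordered} $2$-component links. Property (iv) of the $\eta$-function then gives $\eta(K_0,U_0;t)=\eta(K_1,U_1;t)$.

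For additivity, the plan is to identify $L_m(K\widetilde{\#}J)$ with the component-wise equivariant connected sum of $L_m(K)$ and $L_m(J)$: the link obtained from $L_m(K)\sqcup L_m(J)$ by simultaneously performing a band sum on the pair of knot components (yielding $K\widetilde{\#}J$) and on the pair of meridian components (yielding $U_K\#U_J$, which is again an unknotted meridian of the fused band). This is geometrically plausible because the equivariant connect-sum operation, when localized near the band region of each $\widehat{L_b}$-construction, fuses the bands $B_K$ and $B_J$ into the single band of $\widehat{L_b}(K\widetilde{\#}J)$ and simultaneously drags the meridians together. Granting this identification, I would invoke the Kojima--Yamasaki additivity of $\eta$ under component-wise connected sum: for links $L_i=J_i\cup M_i$ with $\lk(J_i,M_i)=0$,
\[ \eta(J_1\# J_2,\, M_1\# M_2;\,t) \;=\; \eta(J_1,M_1;t)+\eta(J_2,M_2;t), \]
which can be proved by decomposing the infinite cyclic cover of $S^3\setminus(J_1\# J_2)$ along a lifted splitting sphere into pieces corresponding to each factor and observing that the intersection numbers defining $\eta$ localize to those pieces. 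Specializing to $L_m(K)$ and $L_m(J)$ yields $\eta(L_m(K\widetilde{\#}J))=\eta(L_m(K))+\eta(L_m(J))$.

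The main obstacle is the geometric identification in the second paragraph. A naive band sum on only the knot components of $L_m(K)\sqcup L_m(J)$ produces a $3$-component link with two distinct meridians, not the $2$-component link $L_m(K\widetilde{\#}J)$. Making the two meridians fuse into a single component in an equivariant manner requires careful control over the location of the connect-sum disk relative to the bands $B_K, B_J$ and their meridians: one must arrange $U_K$ and $U_J$ to meet the equivariant sphere along which the connect sum is performed, so that after gluing they are joined into the single meridian of the fused band. Tracking these equivariant isotopies through the reassembly is where I expect the bulk of the technical work to lie.
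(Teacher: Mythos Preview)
Your well-definedness argument is identical to the paper's.

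For additivity the paper follows the same overall plan but with one simplification that dissolves the obstacle you flag. Instead of insisting that $L_m(K\widetilde{\#}J)$ be a component-wise \emph{connected} sum of $L_m(K)$ and $L_m(J)$, the paper only shows (pictorially) that it is obtained from the split union $L_m(K)\sqcup L_m(J)$ by a \emph{band sum}, and then invokes \cite[Theorem~7.1]{cochran1985geometric}, which already gives additivity of the Kojima--Yamasaki $\eta$-function under band sums of this kind. Since band sums are more flexible than connected sums, the geometric identification becomes a short diagram chase rather than the delicate equivariant isotopy you anticipate, and the additivity of $\eta$ comes off the shelf rather than from a cover-splitting argument. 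Your route would also succeed---the additivity of $\eta$ under component-wise connected sum that you sketch is correct---but it trades an easy citation for extra work on exactly the step you were worried about.
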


\begin{proof}
Let $K$ and $J$ be two directed strongly invertible knots. By Proposition~\ref{prop:conc_inv} if $K$ and $J$ are equivariantly concordant then $L_m(K)$ and $L_m(J)$ are concordant. Since the $\eta$-function is a concordance invariant we have that $\eta(L_m(K))=\eta(L_m(J))$, therefore $\eta(L_m(-))$ is well defined.
Next we have to show that $\eta(L_m(K\widetilde{\#}J))=\eta(L_m(K))+\eta(L_m(J))$. This follows by observing that $L_m(K\widetilde{\#}J)$ is obtained from $L_m(K)$ and $L_m(J)$ by a band sum, as shown in Figure~\ref{band_sum} and using \cite[Theorem 7.1]{cochran1985geometric}.
\begin{figure}[ht]
    \centering
    \begin{tikzpicture}
         \draw (0,0) node[above right]{\includegraphics[width=0.7\linewidth]{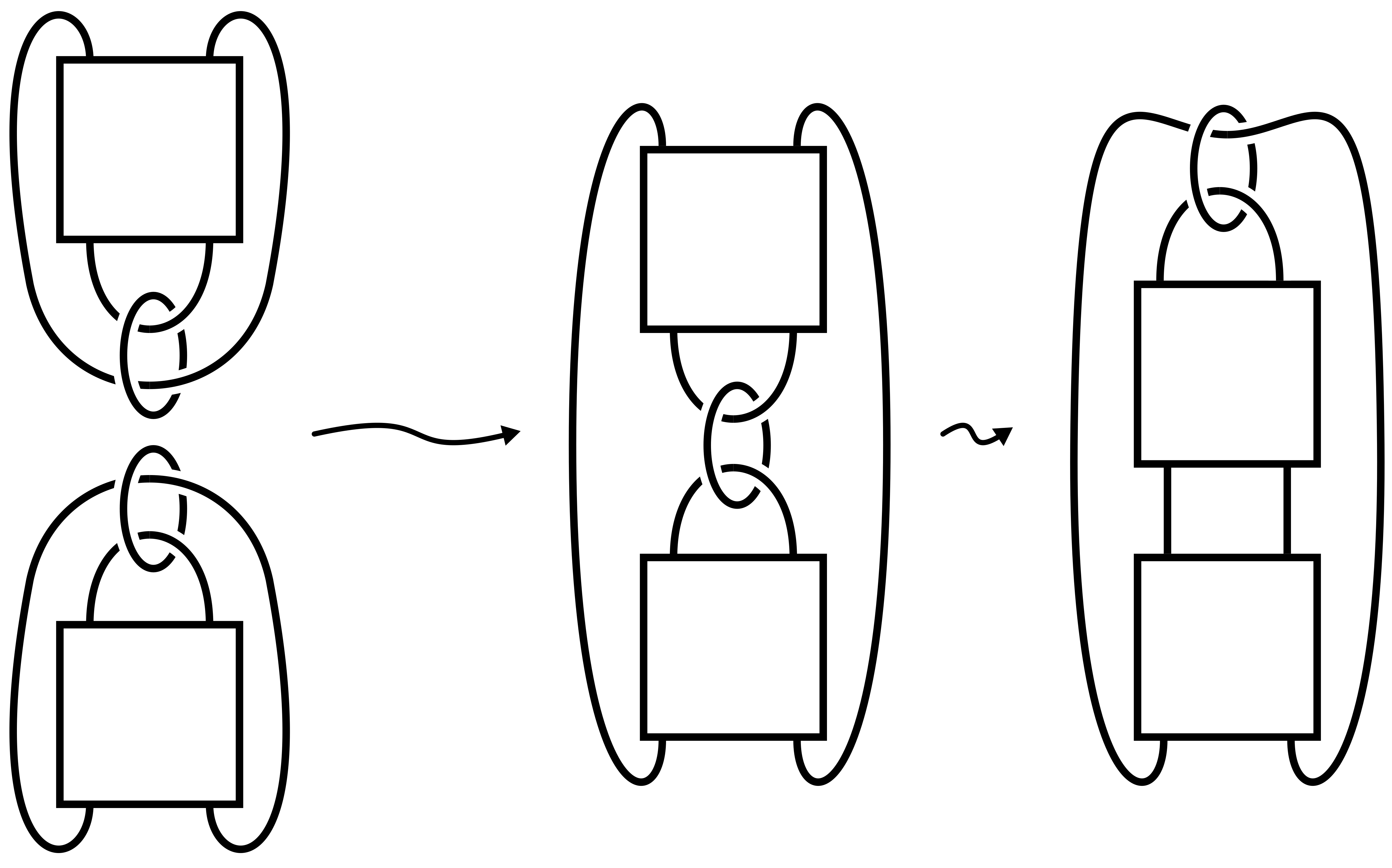}};

   		\draw (1.24,5.58) node (a) [scale=1.3] {$K$};
   		\draw (1.24,1.35) node (a) [scale=1.3] {$J$};

   		\draw (3.25,3.85) node (a) [scale=1] {band sum};
     
   		\draw (5.65,4.9) node (a) [scale=1.3] {$K$};
   		\draw (5.65,1.8) node (a) [scale=1.3] {$J$};
     
   		\draw (9.4,1.86) node (a) [scale=1.3] {$K$};
   		\draw (9.4,3.85) node (a) [scale=1.3] {$J$};
    \end{tikzpicture}
    \caption{The band sum of $L_m(K)$ and $L_m(J)$ is $L_m(K\widetilde{\#}J)$.}
    \label{band_sum}
\end{figure}
\end{proof}

The following proposition is an immediate consequence of \cite[Proposition 1]{kaiser1992strong} and \cite[Theorem 7.1]{cochran1985geometric}.
\begin{prop}
The moth polynomial of a directed strongly invertible knot $K$ can be computed by the following formula:
$$\eta(L_m(K))(t)=\frac{\nabla_{\widehat{L_b}(K)}(z)}{z\nabla_K(z)},$$
where $\nabla_L(z)$ is the Conway polynomial of an oriented (or semi-oriented) link $L$ and $z=i(t^{1/2}-t^{-1/2})$.
\end{prop}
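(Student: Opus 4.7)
The plan is to peel the formula apart into the two ingredients cited by the authors: apply Cochran's formula to $L_m(K)$ viewed as a $2$-component link of linking number zero, and then use Kaiser's invariance of the Conway polynomial under strong fusion to pass from $L_m(K)$ back to $\widehat{L_b}(K)$.

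First I would verify the hypotheses needed to invoke \cite[Theorem 7.1]{cochran1985geometric}. Write $L_m(K)=K\cup U$. By construction $U$ is a small meridian of an interior point of the core of the band $B$, so it bounds an embedded disk in $S^3$ that is disjoint from $K$; hence $U$ is unknotted and $\lk(K,U)=0$. Cochran's formula for a $2$-component link of linking number zero gives
\[
\nabla_{L_m(K)}(z)\;=\;z\cdot\nabla_K(z)\cdot\nabla_U(z)\cdot\eta(K,U;t),
\]
with $z=i(t^{1/2}-t^{-1/2})$. Since $\nabla_U(z)=1$, solving for $\eta(K,U;t)=\eta(L_m(K))(t)$ yields
\[
\eta(L_m(K))(t)\;=\;\frac{\nabla_{L_m(K)}(z)}{z\,\nabla_K(z)}.
\]

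Next I would invoke \cite[Proposition 1]{kaiser1992strong}. By the remark after the definition of the moth link, $L_m(K)$ is exactly the strong fusion of $\widehat{L_b}(K)$ along the equivariant band $B$ together with its meridian $U$. Kaiser's proposition states that the Conway polynomial is invariant under strong fusion, so
\[
\nabla_{L_m(K)}(z)\;=\;\nabla_{\widehat{L_b}(K)}(z).
\]
Substituting this into the previous display gives the claimed formula.

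The only genuine verifications are the geometric facts needed to apply the two external results, and I expect the main point to watch is the bookkeeping of orientations and semi-orientations: Cochran's formula is sensitive to the orientations on the components of $L_m(K)$, so one should check that the semi-orientation on $\widehat{L_b}(K)$ (the one opposite to that of $L_b(K)$) is exactly the orientation on $K$ that agrees with the strong fusion picture in Figure~\ref{moth}, and that passing from $\widehat{L_b}(K)$ to $L_m(K)$ via strong fusion matches the band-and-meridian setup for which Kaiser proves Conway-invariance. Once these compatibilities are recorded, the computation is a one-line substitution.
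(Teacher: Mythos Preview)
Your approach is correct and is exactly the one the paper intends: the authors give no proof beyond citing \cite[Proposition~1]{kaiser1992strong} and \cite[Theorem~7.1]{cochran1985geometric}, and you have unpacked precisely how those two results combine.

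One genuine correction, however. You write that $U$ ``bounds an embedded disk in $S^3$ that is disjoint from $K$''. This is false. After performing the band move on $\widehat{L_b}(K)$ along $B$, the knot $K$ contains the two longitudinal arcs of $\partial B$, and the small meridian disk of the core of $B$ meets each of them transversally in one point. What is true is that these two intersection points carry opposite signs, because the band move is coherent with the semi-orientation on $\widehat{L_b}(K)$ (equivalently, the two longitudinal arcs of $B$ inherit opposite orientations as parts of $K$). Hence $\lk(K,U)=0$ algebraically, even though the geometric intersection is $2$. The unknottedness of $U$ is of course unaffected. With this fix the hypotheses of Cochran's formula are verified and the remainder of your argument goes through verbatim.
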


Since $\eta(L_m(-))$ is a homomorphism, the proposition above implies the following result.

\begin{thm}\label{infinite_order}
Let $K$ be a directed strongly invertible knot such that $\nabla_{\widehat{L_b}(K)}(z)\neq0$. Then $K$ is not equivariantly slice and has infinite order in $\C$.
\end{thm}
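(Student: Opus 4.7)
The plan is to deduce the theorem directly from the two facts established immediately before it: the explicit formula $\eta(L_m(K))(t) = \nabla_{\widehat{L_b}(K)}(z)/(z\nabla_K(z))$, and the fact that $\eta(L_m(-))$ is a group homomorphism $\C \to \Q(t)$.

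First I would check that the hypothesis $\nabla_{\widehat{L_b}(K)}(z)\neq 0$ forces $\eta(L_m(K))(t)$ to be a non-zero element of $\Q(t)$. Since $K$ is a knot, $\nabla_K(0)=1$, so $\nabla_K(z)$ is a non-zero polynomial; consequently $z\nabla_K(z)$ remains non-zero after the substitution $z = i(t^{1/2}-t^{-1/2})$, because this substitution is injective on polynomials (the image of $z$ is a non-constant Laurent polynomial in $t^{1/2}$, so leading degrees are preserved). With both numerator and denominator non-zero, the explicit formula exhibits $\eta(L_m(K))(t)$ as a non-zero rational function of $t$.

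The conclusion then falls out of the homomorphism property together with the fact that the additive group of $\Q(t)$ is torsion-free: any class in $\C$ whose image under $\eta(L_m(-))$ is non-zero must itself have infinite order. In particular $[K]\neq 0$ in $\C$, so $K$ is not equivariantly slice.

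There is essentially no obstacle to this argument; all the genuine work has already been absorbed into the preceding results (concordance invariance of $L_m$, additivity of the moth polynomial under equivariant connected sum via \cite{cochran1985geometric}, and the Conway-polynomial formula coming from \cite{kaiser1992strong}). The theorem is an immediate corollary of that infrastructure, and the only step that might warrant a sentence in the write-up is the observation that $\nabla_K(0)=1$ keeps the denominator from vanishing.
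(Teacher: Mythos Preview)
Your argument is correct and is exactly the paper's approach: the theorem is stated there as an immediate consequence of the Conway-polynomial formula for the moth polynomial together with the fact that $\eta(L_m(-))$ is a group homomorphism into the torsion-free group $\Q(t)$. Your extra remark that $\nabla_K(0)=1$ keeps the denominator non-zero is the only detail the paper leaves implicit.
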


As an immediate application of Theorem \ref{infinite_order} we have the following refinement of the results in Section~\ref{sec:2bri} on $2$-bridge knots.

\begin{prop}\label{prop:2bri}
Every $2$-bridge knot has infinite order in $\C$, independently of the choice of strong inversion and direction.
\end{prop}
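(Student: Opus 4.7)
The plan is to apply Theorem \ref{infinite_order}: it suffices to establish, for every $2$-bridge knot $K$ equipped with an arbitrary strong inversion and direction, that $\nabla_{\widehat{L_b}(K)}(z) \neq 0$. By Remark \ref{rem_sliceness} the equivariant concordance order of a directed strongly invertible knot is independent of the choice of direction, and by the classification of strong inversions recalled in Section~\ref{sec:preliminari} every $2$-bridge knot equipped with a strong inversion is presented by some diagram $I_1(\alpha_1,\dots,\alpha_n;c_1,\dots,c_n)$. I therefore reduce to checking the non-vanishing of $\nabla_{\widehat{L_b}(K)}(z)$ for every diagram of this form.

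The key step is to re-use the computation from the nullity proof of Proposition~\ref{no_2_slice}, where it is shown that $L_b(K)$ is a $2$-bridge link whose associated rational number $p''/q''$ lies in $\Q\setminus\{0\}$. Consequently the $2$-fold branched cover $\Sigma(L_b(K))$ is the lens space $L(p'',q'')$, a rational homology sphere satisfying $|H_1(\Sigma(L_b(K));\Z)| = |p''| \neq 0$. This yields $\det(L_b(K)) = |p''| \neq 0$.

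Next I would observe that the double branched cover of a link depends only on its unoriented link type, so reversing the semi-orientation on one component does not affect the determinant, giving $\det(\widehat{L_b}(K)) = \det(L_b(K)) \neq 0$. Since for any link $L$ one has $|\nabla_L(2i)| = \det(L)$, it follows that $\nabla_{\widehat{L_b}(K)}(2i) \neq 0$, and in particular $\nabla_{\widehat{L_b}(K)}(z) \neq 0$ as a polynomial. Theorem~\ref{infinite_order} then produces the desired conclusion that $K$ has infinite order in $\C$.

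No step here is a genuine obstacle once the nullity proof of Section~\ref{nullity_proof} is available; the proposal essentially notes that the determinant non-vanishing already established there is enough, via the substitution $z=2i$, to force non-vanishing of the Conway polynomial, which by Theorem~\ref{infinite_order} upgrades the sliceness obstruction into an infinite-order obstruction. The only point requiring a small sanity check is that, although reversing the semi-orientation on one component of $L_b(K)$ may alter $\nabla$ non-trivially, it cannot annihilate it, because the determinant is independent of orientation.
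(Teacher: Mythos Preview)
Your proof is correct and essentially identical to the paper's: both reduce via Remark~\ref{rem_sliceness} to the $I_1$ diagrams, invoke the nullity proof to see that $\Sigma(\widehat{L_b}(K))=\Sigma(L_b(K))$ is a rational homology sphere, and then use that $\det(L)=|\Delta_L(-1)|$ (equivalently $|\nabla_L(2i)|$) is non-zero to conclude $\nabla_{\widehat{L_b}(K)}\neq 0$ and apply Theorem~\ref{infinite_order}. The only cosmetic difference is that the paper phrases the final non-vanishing step via the Alexander polynomial rather than the Conway polynomial.
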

\begin{proof}
First of all, observe that by Remark \ref{rem_sliceness} it is sufficient to show that a directed strongly invertible knot $K$ of type $I_1(\alpha_1,\dots,\alpha_n,c_1,\dots,c_n)$, with the direction specified in Figure~\ref{I1a}, has infinite order in $\C$.

As proven in Subsection \ref{nullity_proof} we know that
$\Sigma(L_b(K))=\Sigma(\widehat{L_b}(K))$ is a rational homology $S^3$.

Recall now that for a link $L\subset S^3$ we have that $|H_1(\Sigma(L),\Z)|=|\Delta_L(-1)|$, where $0$ means that the group is infinite.
Since $H_1(\Sigma(\widehat{L_b}(K)),\Z)$ is finite, we deduce that the Alexander polynomial of $\widehat{L_b}(K)$ is non-zero, and hence by Theorem \ref{infinite_order} that $K$ has infinite order in $\C$.
\end{proof}

\bibliographystyle{alpha} 
\bibliography{refs} 

\begin{thebibliography}{DHM22}

\bibitem[AB21]{alfieri2021strongly}
Antonio Alfieri and Keegan Boyle.
\newblock Strongly invertible knots, invariant surfaces, and the
  {Atiyah-Singer} signature theorem, preprint, arXiv:2109.09915, 2021.

\bibitem[BI22]{boyle2021equivariant}
Keegan Boyle and Ahmad Issa.
\newblock Equivariant 4-genera of strongly invertible and periodic knots.
\newblock {\em Journal of Topology}, 15(3):1635--1674, 2022.

\bibitem[BM84]{smith}
Hyman Bass and John~W. Morgan.
\newblock {\em The Smith conjecture / edited by John W. Morgan and Hyman Bass}.
\newblock Academic Press Orlando [Fla.], 1984.

\bibitem[Coc85]{cochran1985geometric}
Tim~D Cochran.
\newblock Geometric invariants of link cobordism.
\newblock {\em Commentarii Mathematici Helvetici}, 60(1):291--311, 1985.

\bibitem[DHM22]{dai_hedden_mallick}
Irving Dai, Matthew Hedden, and Abhishek Mallick.
\newblock Corks, involutions, and heegaard floer homology.
\newblock {\em Journal of the European Mathematical Society}, 2022.

\bibitem[DMS22]{dai_mallick_stoffregen}
Irving Dai, Abhishek Mallick, and Matthew Stoffregen.
\newblock Equivariant knots and knot {Floer} homology, preprint,
  arXiv:2201.01875, 2022.

\bibitem[DP23]{diprisa}
Alessio Di~Prisa.
\newblock The equivariant concordance group is not abelian.
\newblock {\em Bulletin of the London Mathematical Society}, 55(1):502--507,
  2023.

\bibitem[HT85]{hatcher1985incompressible}
Allen Hatcher and William Thurston.
\newblock Incompressible surfaces in 2-bridge knot complements.
\newblock {\em Inventiones mathematicae}, 79:225--246, 1985.

\bibitem[Kai92]{kaiser1992strong}
Uwe Kaiser.
\newblock Strong band sum and dichromatic invariants.
\newblock {\em manuscripta mathematica}, 74(1):237--251, 1992.

\bibitem[Kaw]{kawauchisurvey}
A~Kawauchi.
\newblock A survey of knot theory,(1996).

\bibitem[KL02]{kauffman2002classification}
Louis~H Kauffman and Sofia Lambropoulou.
\newblock On the classification of rational knots.
\newblock {\em arXiv preprint math/0212011}, 2002.

\bibitem[KT76]{kauffman1976signature}
Louis~H Kauffman and Laurence~R Taylor.
\newblock Signature of links.
\newblock {\em Transactions of the American Mathematical Society},
  216:351--365, 1976.

\bibitem[KY79]{kojima_yamasaki}
Sadayoshi Kojima and Masayuki Yamasaki.
\newblock Some new invariants of links.
\newblock {\em Inventiones Mathematicae}, 54(3):213--228, October 1979.

\bibitem[MP23]{miller_powell}
Allison~N. Miller and Mark Powell.
\newblock Strongly invertible knots, equivariant slice genera, and an
  equivariant algebraic concordance group.
\newblock {\em Journal of the London Mathematical Society}, 2023.

\bibitem[Sak86]{sakuma}
Makoto Sakuma.
\newblock On strongly invertible knots.
\newblock {\em Algebraic and topological theories (Kinosaki, 1984)}, 01 1986.

\bibitem[Sie75]{siebenmann1975exercices}
Lislot Siebenmann.
\newblock {\em Exercices sur les n{\oe}uds rationnels}.
\newblock D{\'e}partement de Math{\'e}matique, 1975.

\bibitem[Sna18]{snape}
Michael Snape.
\newblock Homological invariants of strongly invertible knots.
\newblock {\em PhD thesis, University of Glasgow}, 2018.

\end{thebibliography}
\end{document}